\numberwithin{equation}{section} 
\newtheorem{thm}{Theorem}[section]
\newtheorem{lem}[thm]{Lemma}
\newtheorem{de}[thm]{Definition}
\newcommand{\R}{\mathbb{R}}
\newcommand{\N}{\mathbb{N}}
\begin{document}
\baselineskip 14pt
\title{Inhomogeneous and simultaneous Diophantine approximation in beta dynamical systems}

\author{Yu-Feng Wu}
\address[]{School of Mathematics and Statistics\\ Central South University\\ Changsha, 410085, PR China}
\email{yufengwu.wu@gmail.com}

\keywords{beta dynamical systems,  Diophantine approximation, Hausdorff dimension}
\thanks{2010 {\it Mathematics Subject Classification}: 11K55, 28A80}


\date{}

\begin{abstract}
In this paper, we investigate inhomogeneous and simultaneous Diophantine approximation in beta dynamical systems. For $\beta>1$ let $T_{\beta}$ be the $\beta$-transformation on $[0,1]$. We determine the Lebesgue measure and Hausdorff dimension of the set 
\[\left\{(x,y)\in [0,1]^2: |T_{\beta}^nx-f(x,y)|<\varphi(n)\text{ for infinitely many }n\in\N\right\},\]
where $f:[0,1]^2\to [0,1]$ is a Lipschitz function and $\varphi$ is a positive function on $\N$. Let $\beta_2\geq \beta_1>1$, $f_1,f_2:[0,1]\to [0,1]$ be two Lipschitz functions, $\tau_1,\tau_2$ be two positive continuous functions on $[0,1]$. We also determine the Hausdorff dimension of the set 
\[\left\{(x,y)\in [0,1]^2: \begin{aligned}&|T_{\beta_1}^nx-f_1(x)|<\beta_1^{-n\tau_1(x)}\\ &|T_{\beta_2}^ny-f_2(y)|<\beta_2^{-n\tau_2(y)}\end{aligned}\text{ for infinitely many }n\in\N\right\}.\]
Under certain additional assumptions,  the Hausdorff dimension of the set 
\[\left\{(x,y)\in [0,1]^2: \begin{aligned}&|T_{\beta_1}^nx-g_1(x,y)|<\beta_1^{-n\tau_1(x)}\\ &|T_{\beta_2}^ny-g_2(x,y)|<\beta_2^{-n\tau_2(y)}\end{aligned}\text{ for infinitely many }n\in\N\right\}\]
is also determined, where $g_1,g_2:[0,1]^2\to [0,1]$ are two Lipschitz functions.
\end{abstract}

\maketitle

\section{Introduction}\label{S-1}

Given a real number $\beta>1$, the $\beta$-transformation $T_{\beta}: [0,1]\to [0,1]$ is define by 
\[T_{\beta}x=\beta x-\lfloor \beta x\rfloor \quad \text{ for }x\in [0,1],\]
where $\lfloor \cdot \rfloor$ denotes the integral part of a real number. In this paper, we study the metric properties of orbits in the beta dynamical system $([0,1], T_{\beta})$. 

In 1957, R{\'en}yi \cite{Renyi57} initialed the study of the map $T_{\beta}$ in his investigation of expansions of real numbers in non-integral bases. Parry \cite{Parry60} proved that there is an invariant and ergodic measure for $T_{\beta}$, which is equivalent to the Lebesgue measure $\mathcal{L}$ on $[0,1]$. Then  Birkhoff's ergodic theorem implies that for any fixed $x_0\in [0,1]$, 
\begin{equation}\label{eqbirkhoff}
\liminf_{n\to\infty}|T_{\beta}^nx-x_0|=0
\end{equation}
for $\mathcal{L}$-almost all $x\in [0,1]$. On the other hand, the Poincar{\'e} Recurrence Theorem yields that for $\mathcal{L}$-almost all $x\in [0,1]$, 
\begin{equation}\label{eqPoincare}
\liminf_{n\to\infty}|T_{\beta}^nx-x|=0.
\end{equation}
Both \eqref{eqbirkhoff} and \eqref{eqPoincare}  are qualitative in nature, and taking into account the speed of convergence in \eqref{eqbirkhoff} and \eqref{eqPoincare} leads to the study of the metric properties of the set
\begin{equation}\label{eqDbetax0}
D_{\beta}(\varphi,x_0)=\left\{x\in [0,1]: |T_{\beta}^nx-x_0|<\varphi(n)\text{ for infinitely many }n\in\N\right\}
\end{equation}
and the set 
\begin{equation}\label{eqRvarphix}
R_{\beta}(\varphi)=\left\{x\in [0,1]: |T_{\beta}^nx-x|<\varphi(n)\text{ for infinitely many }n\in\N\right\},
\end{equation}
respectively, where $\varphi:\N\to(0,\infty)$ is a positive function. 
The study of $D_{\beta}(\varphi,x_0)$ is called the {\em shrinking target problem}, and the study of $R_{\beta}(\varphi)$ is called {\em quantitative recurrence} in beta dynamical systems. When $x_0\neq 0$, the set $D_{\beta}(\varphi,x_0)$ can be also viewed as the {\em inhomogeneous Diophantine approximation} by orbits in $([0,1], T_{\beta})$.  In the following, we introduce related works which motivated the present paper. In a general measure preserving dynamical system with compatible metric, the shrinking target problem was introduced by Hill and Velani \cite{HillVelani95}, and for a pioneering work on quantitative recurrence one refers to Boshernitzan \cite{Boshernitzan93}. 

In 1967, Philipp \cite{PhilippW67} proved that for any fixed $x_0\in [0,1]$,
\begin{equation}\label{eq01law}
\mathcal{L}(D_{\beta}(\varphi,x_0))=\begin{cases}0&\text{ if }\sum_{n=1}^{\infty}\varphi(n)<\infty,\\
1&\text{ if }\sum_{n=1}^{\infty}\varphi(n)=\infty.
\end{cases}
\end{equation}
When $\sum_{n=1}^{\infty}\varphi(n)<\infty$, Shen and Wang \cite{ShenWang13} obtained the Hausdorff dimension of $D_{\beta}(\varphi,x_0)$. They proved that 
\begin{equation}\label{eqdimHbetavx0}
\dim_{\rm H}D_{\beta}(\varphi, x_0)=\frac{1}{1+\alpha} \quad \text{ with }\alpha=\liminf_{n\to\infty}\frac{\log_{\beta}\varphi(n)^{-1}}{n},
\end{equation}
where $\dim_{\rm H}$ denotes the Hausdorff dimension. 

Concerning the set $R_{\beta}(\varphi)$ in \eqref{eqRvarphix}, Tan and Wang \cite{TanWang11} obtained  its Hausdorff dimension, which is also equal to $\frac{1}{1+\alpha}$ given in \eqref{eqdimHbetavx0}. Wang \cite{WangW18} considered the following extension of $R_{\beta}(\varphi)$:
\begin{equation}\label{eqsetxTfx}
\left\{x\in [0,1]: |T_{\beta}^nx-f(x)|<\varphi(n)\text{ for infinitely many }n\in\N\right\},
\end{equation}
where $f:[0,1]\to [0,1]$ is a Lipschitz function. He proved that this set has  Hausdorff dimension $\frac{1}{1+\alpha}$ given in \eqref{eqdimHbetavx0} as well. The set \eqref{eqsetxTfx} can be viewed as the inhomogeneous Diophantine approximation by orbits in $([0,1], T_{\beta})$ in which the inhomogeneous part (i.e. the term $f(x)$) is allowed to vary.  Very recently, L{\"u}, Wang and Wu \cite{FLBWJW21} proved that the  Lebesgue measure of the set \eqref{eqsetxTfx} also satisfies  \eqref{eq01law}. 

Another related set is the set of points $(x,y)\in [0,1]^2$ such that the orbit of $x$ under $T_{\beta}$ approximates $y$ with a given speed, which is studied by Ge and L{\"u} \cite{GYHLF15}. More precisely, Ge and L{\"u} proved that the two-dimensional Lebesgue measure $\mathcal{L}^2$ of the set \
\begin{equation}\label{eqGeLuset}
\left\{(x,y)\in [0,1]^2: |T_{\beta}^nx-y|<\varphi(n)\text{ for infinitely many }n\in\N\right\}
\end{equation}
satisfies \eqref{eq01law} and this set has  Hausdorff dimension $1+\frac{1}{1+\alpha}$ with $\alpha$ given in \eqref{eqdimHbetavx0}. Coons, Hussain and Wang \cite{CHW16} extended this result to the  generalised Hausdorff measure.

In view of the sets \eqref{eqDbetax0}, \eqref{eqRvarphix}, \eqref{eqsetxTfx} and \eqref{eqGeLuset}, and the corresponding results mentioned above, a natural question is what if we replace the inhomogeneous part $y$ in  \eqref{eqGeLuset} by a more general function, especially depending on both coordinates. Concerning this question, we have the following result. 

\begin{thm}\label{thm1}
Let $\beta>1$, $f:[0,1]^2\to [0,1]$ be a Lipschitz function, and $\varphi$ be a positive function on $\N$. Set
\[W_{\beta}(f,\varphi)=\left\{(x,y)\in [0,1]^2: |T_{\beta}^nx-f(x,y)|<\varphi(n)\text{ for infinitely many }n\in\N\right\}.\] Then 
\[\mathcal{L}^2(W_{\beta}(f, \varphi))=\begin{cases} 0 & \text{ if }\sum_{n=1}^{\infty}\varphi(n)<\infty,\\
1 & \text{ if }\sum_{n=1}^{\infty}\varphi(n)=\infty.
\end{cases}\]
Furthermore, if $\sum_{n=1}^{\infty}\varphi(n)<\infty$, then 
\[\dim_{\rm H}W_{\beta}(f, \varphi)=1+\frac{1}{1+\alpha},\]
where $\alpha=\liminf_{n\to\infty}\frac{\log_{\beta}\varphi(n)^{-1}}{n}$. 
\end{thm}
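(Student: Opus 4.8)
\section*{Proof proposal for Theorem~\ref{thm1}}

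\textbf{Strategy.} I would separate the Lebesgue–measure statement from the dimension statement, handle the former by slicing and the one–dimensional results already in the excerpt, obtain the dimension upper bound by a direct covering, and obtain the (harder) dimension lower bound by a Cantor construction plus the mass distribution principle.

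\emph{The measure statement} reduces to the one–dimensional results quoted above. For each fixed $n$, $T_\beta^n$ is piecewise linear with finitely many branches, so $\{(x,y):|T_\beta^nx-f(x,y)|<\varphi(n)\}$ is a finite union of (relatively) open sets and $W_\beta(f,\varphi)$ is Borel; hence by Tonelli's theorem
\[\mathcal{L}^2(W_\beta(f,\varphi))=\int_0^1\mathcal{L}\big(\{x\in[0,1]:|T_\beta^nx-f(x,y)|<\varphi(n)\text{ for infinitely many }n\}\big)\,dy.\]
For each fixed $y$, the function $x\mapsto f(x,y)$ is Lipschitz on $[0,1]$ with the same constant as $f$, so the integrand is the Lebesgue measure of a set precisely of the form \eqref{eqsetxTfx}, which by \cite{FLBWJW21} equals $0$ when $\sum_n\varphi(n)<\infty$ and $1$ when $\sum_n\varphi(n)=\infty$. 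Integrating in $y$ gives the two cases. (If $\varphi(n)\ge1$ infinitely often then $W_\beta(f,\varphi)=[0,1]^2$ and $\sum_n\varphi(n)=\infty$, so from now on I assume $\varphi(n)<1$ for all large $n$; in the convergent case this forces $\varphi(n)\to0$.)

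\emph{The dimension upper bound.} For $s\ge1$ I would use the cover $W_\beta(f,\varphi)\subseteq\bigcup_{n\ge N}\{(x,y):|T_\beta^nx-f(x,y)|<\varphi(n)\}$. Fix $n$ with $\beta^n>2L$ ($L$ the Lipschitz constant) and a rank-$n$ fundamental interval $I$, on which $T_\beta^n$ is linear with slope $\beta^n$; then $x\mapsto T_\beta^nx-f(x,y)$ is strictly increasing with increments $\ge(\beta^n-L)|x-x'|$, so for each fixed $y$ the slice $\{x\in I:|T_\beta^nx-f(x,y)|<\varphi(n)\}$ is an interval of length $\le4\varphi(n)\beta^{-n}$, and changing $y$ by $\delta_n:=\varphi(n)\beta^{-n}$ moves this interval by $\le2L\delta_n\beta^{-n}=o(\delta_n)$. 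Hence the part of the cover in $I\times[0,1]$ is covered by $O(\delta_n^{-1})$ squares of side $\delta_n$, and since there are $O(\beta^n)$ rank-$n$ fundamental intervals,
\[\mathcal{H}^s_\infty(W_\beta(f,\varphi))\lesssim\sum_{n\ge N}\beta^{n}\cdot\delta_n^{-1}\cdot\delta_n^s=\sum_{n\ge N}\beta^{2n}\varphi(n)^{-1}\big(\varphi(n)\beta^{-n}\big)^s=\sum_{n\ge N}\beta^{(2-s)n}\varphi(n)^{s-1}.\]
By the definition of $\alpha$ one has $\varphi(n)\le\beta^{-(\alpha-\epsilon)n}$ for all large $n$, so for $s>1+\frac{1}{1+\alpha}$ and $\epsilon$ small the exponent $(2-s)-(\alpha-\epsilon)(s-1)$ is negative and the sum tends to $0$ as $N\to\infty$; thus $\mathcal{H}^s(W_\beta(f,\varphi))=0$, and letting $s\downarrow1+\frac1{1+\alpha}$ gives $\dim_{\rm H}W_\beta(f,\varphi)\le1+\frac1{1+\alpha}$. (When $\alpha=\infty$ one instead uses $\varphi(n)\le\beta^{-Mn}$ for all large $n$ and arbitrarily large $M$, obtaining $\dim_{\rm H}W_\beta(f,\varphi)\le1$.)

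\emph{The dimension lower bound is the main obstacle}, the difficulty being that the inhomogeneous term $f(x,y)$ varies with $x$. I would build a Cantor-type subset of $W_\beta(f,\varphi)$ from rectangles $I\times J$ ($I$ a fundamental interval in the $x$-variable, $J$ an interval in the $y$-variable) and apply the mass distribution principle. Fix $\epsilon>0$ and, using that $\alpha$ is a lower limit, a subsequence $(n_k)$ with $\varphi(n_k)^{-1}\le\beta^{(\alpha+\epsilon)n_k}$, chosen growing so fast that $m_{k-1}/n_k\to0$, where $m_k:=\lceil n_k+\log_\beta\varphi(n_k)^{-1}\rceil$. At stage $k$ each surviving rectangle $I\times J$ is refined as follows: partition $J$ into subintervals $\tilde J$ of length $\asymp\varphi(n_k)/L$ (small implicit constant) and partition $I$ into full rank-$n_k$ fundamental intervals $I_j$ (these exist in positive proportion by the structure of $\beta$-expansions, cf.\ \cite{ShenWang13,GYHLF15}); on each such $I_j$ the map $x\mapsto T_\beta^{n_k}x-f(x,y)$ runs monotonically from $\le0$ to $\ge0$ with slope $\asymp\beta^{n_k}$, its unique zero $x^*(y)$ moving by only $O(\varphi(n_k)\beta^{-n_k})$ as $y$ ranges over $\tilde J$ — this is exactly where the choice of $|\tilde J|$ enters, keeping both the $x$- and $y$-variation of $f$ on $I_j\times\tilde J$ well below the tolerance $\varphi(n_k)$ — so one can pick a subinterval of $I_j$ of length $\asymp\varphi(n_k)\beta^{-n_k}$ on which $|T_\beta^{n_k}x-f(x,y)|<\varphi(n_k)$ for all $y\in\tilde J$, and keep inside it one full fundamental interval $I'_j(\tilde J)$ of rank $m_k$. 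Then every $I'_j(\tilde J)\times\tilde J$ lies in $\{(x,y):|T_\beta^{n_k}x-f(x,y)|<\varphi(n_k)\}$, so the intersection over all stages lies in $W_\beta(f,\varphi)$. At stage $k$ each parent spawns $\asymp\beta^{n_k-m_{k-1}}$ children in the $x$-direction (shrinking from $\beta^{-m_{k-1}}$ to $\beta^{-m_k}$) and $\asymp\varphi(n_{k-1})/\varphi(n_k)$ in the $y$-direction (shrinking from $\asymp\varphi(n_{k-1})$ to $\asymp\varphi(n_k)$); since $m_k\le(1+\alpha+\epsilon)n_k+1$ and $m_{k-1}/n_k\to0$, the local dimensions are $\to\frac1{1+\alpha+\epsilon}$ in $x$ and $=1$ in $y$. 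Putting the uniform Bernoulli-type measure $\mu$ on the construction and estimating $\mu(B(z,r))\lesssim r^{\,1+\frac1{1+\alpha+\epsilon}-o(1)}$ — the delicate point being the case analysis over the three relevant scales $\beta^{-n_k},\ \varphi(n_k)\beta^{-n_k},\ \beta^{-m_k}$ of these very elongated rectangles — yields $\dim_{\rm H}W_\beta(f,\varphi)\ge1+\frac1{1+\alpha+\epsilon}$ for every $\epsilon>0$, hence $\ge1+\frac1{1+\alpha}$. (If $\alpha=\infty$, running the same construction with one $x$-child per branch gives a subset projecting onto a set of full Lebesgue measure on the $y$-axis, so $\dim_{\rm H}W_\beta(f,\varphi)\ge1$, matching the upper bound.)
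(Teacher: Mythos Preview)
Your measure argument and upper-bound covering are essentially the paper's own proof: slice in $y$, invoke \cite{FLBWJW21} for the $0$--$1$ law, and for the upper bound partition $[0,1]$ in the $y$-direction into intervals of length $\varphi(n)\beta^{-n}$, freeze $y$ on each piece, and bound the $x$-slice by an interval of the same scale. The calculations match up to cosmetic differences.

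The lower bound is where you diverge. The paper does \emph{not} build a Cantor set at all: it simply observes that for every fixed $y$ the slice
\[
D_y=\{x\in[0,1]:|T_\beta^nx-f(x,y)|<\varphi(n)\text{ i.o.}\}
\]
is exactly a set of the form \eqref{eqsetxTfx}, so by \cite[Theorem~1.1]{WangW18} one has $\dim_{\rm H}D_y=\tfrac{1}{1+\alpha}$ for \emph{every} $y\in[0,1]$; then Falconer's slicing inequality \cite[Corollary~7.12]{falconer1} (if $\dim_{\rm H}(E\cap\{y=\text{const}\})\ge t$ on a set of $y$'s of positive Lebesgue measure, then $\dim_{\rm H}E\ge 1+t$) gives $\dim_{\rm H}W_\beta(f,\varphi)\ge 1+\tfrac{1}{1+\alpha}$ in two lines. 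Your Cantor-plus-mass-distribution route is correct in outline and would work, but it rederives from scratch what is already packaged in the one-dimensional result plus a standard fibering lemma; the three-scale case analysis you flag as ``delicate'' is exactly the labour the paper avoids. On the other hand, your construction is more self-contained and would adapt to situations (e.g.\ Theorem~\ref{thm3}) where the fibres in $y$ are no longer determined by a one-variable Lipschitz function and the slicing shortcut is unavailable --- which is indeed why the paper has to work harder there.
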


This paper was also motivated by a result of Wang and Li \cite{WWLLL20} on {\em simultaneous Diophantine approximation} in beta dynamical systems. More precisely, let $f, g: [0,1]\to [0,1]$ be two Lipschitz functions, $\tau_1,\tau_2$ be two  positive continuous functions on $[0,1]$ with $\tau_1(x)\leq \tau_2(y)$ for all $x,y\in [0,1]$. Wang and Li \cite{WWLLL20} proved that for any  $\beta>1$,  the Hausdorff dimension of the set 
\begin{equation*}
\left\{(x,y)\in [0,1]^2: \begin{aligned}&|T_{\beta}^nx-f(x)|<\beta^{-n\tau_1(x)}\\ &|T_{\beta}^ny-g(y)|<\beta^{-n\tau_2(y)}\end{aligned}\text{ for infinitely many }n\in\N\right\}
\end{equation*}
is equal to 
\[\min\left\{\frac{2}{1+\theta_{1}}, \frac{2+\theta_{2}-\theta_{1}}{1+\theta_{2}}\right\},\]
where $\theta_{i}=\min_{x\in [0,1]}\tau_i(x)$ for $i=1,2$. 
This generalizes a result of  Hussain and Wang \cite{HW18}, in which they obtained the Hausdorff dimension of the above set when $f,g,\tau_1,\tau_2$ are all constants. 
It is  natural to consider approximation by orbits under two (maybe different) transformations $T_{\beta_1}$ and $T_{\beta_2}$ in the $x$- and $y$-coordinate, respectively. Concerning this, we have the following result.

\begin{thm}\label{thm2}
Let $\beta_2\geq \beta_1>1$, $f_1,f_2:[0,1]\to[0,1]$ be two Lipschitz functions, and $\tau_{1},\tau_2$ be two positive continuous functions on $[0,1]$. Let $F$ be the set 
\begin{equation*}
\left\{(x,y)\in [0,1]^2: \begin{aligned}&|T_{\beta_1}^nx-f_1(x)|<\beta_1^{-n\tau_1(x)}\\ &|T_{\beta_2}^ny-f_2(y)|<\beta_2^{-n\tau_2(y)}\end{aligned}\text{ for infinitely many }n\in\N\right\}.
\end{equation*}
 Then the Hausdorff dimension of $F$ is given by 
 \begin{equation}\label{eqdimformulas}
\dim_{\rm H}F=\begin{cases}\min\left\{\frac{2+\theta_{1}}{1+\theta_{1}}, \frac{2+\theta_{2}-\theta_{1}\log_{\beta_2}\beta_1}{1+\theta_{2}}\right\} & \quad \text{ if }  \beta_1^{1+\theta_{1}}<\beta_2,\\
\min\left\{\frac{1+\log_{\beta_2}\beta_1}{(1+\theta_{1})\log_{\beta_2}\beta_1}, \frac{2+\theta_{2}-\theta_{1}\log_{\beta_2}\beta_1}{1+\theta_{2}}\right\} &  \quad \text{ if }\beta_2\leq \beta_1^{1+\theta_{1}} \leq \beta_2^{1+\theta_{2}}, \\
\min\left\{\frac{1+\log_{\beta_2}\beta_1}{1+\theta_{2}}, \frac{(2+\theta_{1})\log_{\beta_2}\beta_1-\theta_{2}}{(1+\theta_{1})\log_{\beta_2}\beta_1}\right\} & \quad \text{ if } \beta_1^{1+\theta_{1}} > \beta_2^{1+\theta_{2}},
\end{cases}
\end{equation}
where for $i=1,2$, $\theta_i=\min_{x\in[0,1]}\tau_i(x)$. 
\end{thm}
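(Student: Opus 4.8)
\section*{Proof proposal for Theorem \ref{thm2}}

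The plan is to split the orbit condition at each level $n$ into two independent cylinder-type constraints, one in the $x$-coordinate and one in the $y$-coordinate, and then build a product Cantor structure whose dimension we compute by a mass-distribution argument. The starting point is the standard description of the $\beta$-transformation via greedy expansions: the set $\{x:|T_{\beta_1}^nx-f_1(x)|<\beta_1^{-n\tau_1(x)}\}$ is, up to the usual irregularity of full versus non-full cylinders of order $n$, a union of intervals of length roughly $\beta_1^{-n}\cdot\beta_1^{-n\tau_1(x)}=\beta_1^{-n(1+\tau_1(x))}$ sitting inside cylinders of $T_{\beta_1}$ of generation $n$ (length $\approx\beta_1^{-n}$); similarly in the $y$-coordinate with $\beta_2$ and $\tau_2$. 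Since $\tau_1,\tau_2$ are continuous, on a small sub-square they are essentially constant, so we may replace $\tau_i$ by its minimum $\theta_i$ when estimating the dimension from below and work locally; for the upper bound one covers $F$ directly by these product rectangles summed over all $n\geq N$ and all admissible digit strings, and optimizes the exponent in the Hausdorff sum. This reduces Theorem \ref{thm2} to a two-parameter optimization that depends on how the two ``contraction rates'' $\beta_1^{-(1+\theta_1)}$ and $\beta_2^{-(1+\theta_2)}$, as well as the cylinder rates $\beta_1^{-1}$ and $\beta_2^{-1}$, compare to each other — which is exactly the origin of the three cases in \eqref{eqdimformulas}, governed by the position of $\beta_1^{1+\theta_1}$ relative to $\beta_2$ and to $\beta_2^{1+\theta_2}$.

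For the lower bound I would, after passing to a suitable subsystem, construct a Cantor subset $E\subseteq F$ as follows. Fix a rapidly increasing sequence $n_1<n_2<\cdots$ along which $\tau_i$ is nearly equal to $\theta_i$ on the relevant pieces, and at stage $k$ refine each surviving rectangle $I\times J$ by: (i) subdividing $I$ into all full $\beta_1$-cylinders of generation $n_k$ it contains and keeping inside each the interval of length $\beta_1^{-n_k(1+\theta_1)}$ on which $|T_{\beta_1}^{n_k}x-f_1(x)|<\beta_1^{-n_k\tau_1(x)}$ (the Lipschitz graph $x\mapsto f_1(x)$ meets each linear branch of $T_{\beta_1}^{n_k}$ transversally once, so this is a genuine interval), and likewise (ii) subdividing $J$ with the $\beta_2$-data. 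The free levels between $n_{k-1}$ and $n_k$ are filled with \emph{all} full cylinders, which is what injects the extra ``$+1$''-type terms into the dimension. One then defines the natural Borel probability measure $\mu$ on $E$ that spreads mass uniformly among the children at each stage, estimates the $\mu$-measure of an arbitrary small ball $B(z,r)$ by locating $r$ between two consecutive construction scales in each coordinate, and applies the mass distribution principle. The exponent one reads off is precisely the minimum appearing in the relevant case of \eqref{eqdimformulas}; choosing $n_k$ growing fast enough makes the ``wasted'' generations negligible so that $\liminf$ rather than $\limsup$ governs the answer, consistent with $\theta_i=\min\tau_i$.

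For the upper bound the natural cover of $F$ is $\bigcup_{n\geq N}\bigcup_{(\omega,\eta)} I_\omega^{(n)}\times J_\eta^{(n)}$ where $I_\omega^{(n)}$ ranges over generation-$n$ $\beta_1$-cylinders and inside it we only keep the length-$\beta_1^{-n(1+\tau_1)}$ strip (and similarly for $J_\eta^{(n)}$). Covering each such rectangle efficiently by balls of the smaller side-length, the $s$-dimensional Hausdorff sum becomes a sum over $n$ of a product of two one-dimensional contributions; one checks it converges for every $s$ strictly above the claimed value, using that the number of generation-$n$ cylinders of $T_{\beta_i}$ is $\asymp\beta_i^{n}$ (Rényi's bound $\beta_i^n\le \#\le\beta_i^{n+1}/(\beta_i-1)$). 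Here there is a genuine subtlety I expect to be the main obstacle: the $\beta$-transformation has non-full cylinders whose lengths can be much smaller than $\beta^{-n}$, and their distribution depends on the expansion of $1$ in base $\beta$; controlling them requires the standard device of approximating $\beta$ from below by $\beta_{n}$ with eventually periodic (or finite) expansion, for which the number and lengths of cylinders are well behaved, and then letting $\beta_n\to\beta$. Propagating this approximation through \emph{two} bases simultaneously, while keeping the continuous exponents $\tau_1,\tau_2$ under control, and verifying that the three regimes of \eqref{eqdimformulas} are exactly the outcome of the resulting optimization (in particular that the two quantities inside each $\min$ are the only candidates and that the case boundaries match), is the technical heart of the argument; the homogeneous constant-$\tau_i$ case recovers Hussain–Wang \cite{HW18} and serves as a consistency check.
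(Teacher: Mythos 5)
Your overall architecture (cover by product rectangles for the upper bound, build an anisotropic limsup/Cantor structure for the lower bound, localize $\tau_i$ near its minimum by continuity) matches the paper's, but two essential steps are missing and one stated ``main obstacle'' is misplaced. For the upper bound, covering each rectangle $J_{n,\beta_1}(w)\times J_{n,\beta_2}(v)$ ``by balls of the smaller side-length'' produces only \emph{one} of the two exponents inside each $\min$ in \eqref{eqdimformulas}. The other exponent requires a second, different covering by squares of the \emph{larger} side-length, and — crucially in the cases $\beta_1^{1+\theta_1}<\beta_2$ and $\beta_1^{1+\theta_1}>\beta_2^{1+\theta_2}$ — the observation that a single such square (of side $\asymp\beta_1^{-n(1+\theta_1)}$, say) contains on the order of $\beta_1^{-n(1+\theta_1)}/\beta_2^{-n}$ entire order-$n$ cylinders in the other coordinate, so the number of squares actually needed is much smaller than the number of rectangles. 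Without this grouping the larger-side covering gives the exponent $\frac{1+\log_{\beta_2}\beta_1}{(1+\theta_1)\log_{\beta_2}\beta_1}$ in all cases, which is strictly worse than $\frac{2+\theta_1}{1+\theta_1}$ when $\beta_1^{1+\theta_1}<\beta_2$; so your sketch does not reach the claimed upper bound. Also, your worry about non-full cylinders and the device of approximating $\beta$ from below is not the obstacle here: for a covering argument, abnormally short cylinders only help, and the diameter bound $|J_{n,\beta_1}(w)|\le 4\beta_1^{-n(1+\theta_1)}$ follows from the Lipschitz bound $2\beta_1^{-n\theta_1}\ge(\beta_1^n-L)|x-x'|$ on \emph{every} cylinder, full or not, with the count controlled by R\'enyi's bound.

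For the lower bound, the entire quantitative content of the theorem — the three regimes and the specific minima — lives in the ball-measure estimate for your candidate measure $\mu$ on the product Cantor set, and your proposal simply asserts that ``the exponent one reads off is precisely the minimum appearing.'' That computation, with two incommensurable scale sequences $\beta_1^{-n(1+\theta_1)}$ and $\beta_2^{-n(1+\theta_2)}$ interacting with the cylinder scales $\beta_1^{-n}$ and $\beta_2^{-n}$, is exactly what is hard; the paper avoids it by invoking the Wang--Wu mass transference principle from rectangles to rectangles (Lemma \ref{lemmtp}), whose hypotheses it verifies using two specific facts you would also need in some form: (i) among any $n+1$ consecutive order-$n$ cylinders there is a full one (Lemma \ref{lemn1full}), which gives the full-measure ubiquity of the family of rectangles at scale $(\beta_1^{-n(1-\epsilon)},\beta_2^{-n(1-\epsilon)})$; and (ii) in every full cylinder there is a point $x_{n,w}$ with $|T_{\beta_1}^nx_{n,w}-f_1(x_{n,w})|$ arbitrarily small (Lemma \ref{lemxnw}, where the intersection of the graph of $f_1$ with the branch of $T_{\beta_1}^n$ may land on the right endpoint and must be handled separately), so that $J_{n,\beta_1}(w)$ genuinely contains an interval of length $\asymp\beta_1^{-n(1+\theta_1+\epsilon)}$. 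As written, your proof establishes neither the full upper bound nor the lower bound; to complete it you must either carry out the two-scale mass distribution estimate in detail or appeal to the rectangular mass transference principle.
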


Theorem \ref{thm2}  generalizes the result of Wang and Li \cite{WWLLL20} and is the main contribution of this paper. In view of Theorem \ref{thm2} and Theorem \ref{thm1} (and its motivation described above), it is natural to consider replacing the functions $f_1, f_2$ in Theorem \ref{thm2} by functions depending on both coordinates. For this,  we have the following partial result.

\begin{thm}\label{thm3}
Let $\beta_2\geq \beta_1>1$, $g_1,g_2:[0,1]^2\to[0,1]$ be two Lipschitz functions, and $\tau_{1},\tau_2$ be two positive continuous functions on $[0,1]$. Let $G$ be the set 
\begin{equation*}
\left\{(x,y)\in [0,1]^2: \begin{aligned}&|T_{\beta_1}^nx-g_1(x,y)|<\beta_1^{-n\tau_1(x)}\\ &|T_{\beta_2}^ny-g_2(x,y)|<\beta_2^{-n\tau_2(y)}\end{aligned}\text{ for infinitely many }n\in\N\right\}.
\end{equation*}
For $i=1,2$, set $\kappa_i=\max_{x\in[0,1]}\tau_i(x)$. If $\beta_2>\beta_1^{\kappa_1}$ and $\beta_1>\beta_2^{\kappa_2}$, then the Hausdorff dimension of $G$ is given by \eqref{eqdimformulas}.  
\end{thm}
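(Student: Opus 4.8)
The plan is to reduce Theorem~\ref{thm3} to Theorem~\ref{thm2} by showing that, under the separation hypotheses $\beta_2>\beta_1^{\kappa_1}$ and $\beta_1>\beta_2^{\kappa_2}$, the inhomogeneous terms $g_1(x,y)$ and $g_2(x,y)$ can be "decoupled" into the two coordinates at the cost of a negligible perturbation of the exponents. Concretely, I would run a Cantor-set construction in the product space: at step $n$ one selects, in the $x$-direction, a fundamental interval $I_n(x)$ of $T_{\beta_1}$ of length about $\beta_1^{-n}$, and similarly a fundamental interval $J_n(y)$ of $T_{\beta_2}$ of length about $\beta_2^{-n}$; on each such rectangle the point $g_1(x,y)$ varies by at most $\mathrm{Lip}(g_1)\cdot(\beta_1^{-n}+\beta_2^{-n})$, which is far smaller than the target radius $\beta_1^{-n\tau_1(x)}$ precisely because $\kappa_1<\log_{\beta_1}\beta_2$ forces $\beta_1^{-n\tau_1}\gg \beta_2^{-n}$ (and trivially $\beta_1^{-n\tau_1}\gg\beta_1^{-n}$ up to the constant). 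Hence on such a rectangle the condition $|T_{\beta_1}^nx-g_1(x,y)|<\beta_1^{-n\tau_1(x)}$ is, up to a bounded multiplicative change of $\varphi$, a condition on $x$ alone — and symmetrically for the $y$-condition using $\beta_1>\beta_2^{\kappa_2}$. This is exactly the mechanism Wang used for the set~\eqref{eqsetxTfx} and that underlies Theorem~\ref{thm2}; the point of the hypotheses is to make it work simultaneously in both coordinates.

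For the \textbf{upper bound} I would cover $G$ by the natural cylinders. Fixing $n$, the set of $x$ satisfying the $x$-condition for that $n$ is contained in a union of intervals of length $\asymp \beta_1^{-n\tau_1(x)}$ around the preimages of $g_1$-values inside each rank-$n$ fundamental interval of $T_{\beta_1}$; using the Lipschitz bound on $g_1$ and the separation $\beta_2>\beta_1^{\kappa_1}$ one checks that the $y$-slice is essentially unconstrained by the $x$-equation. Summing the resulting product measure over all large $n$ and optimizing the covering exponent reproduces the three-case formula~\eqref{eqdimformulas}; the case division is dictated by whether the "balls" one uses have their $x$-side or $y$-side as the shorter one, i.e.\ by the comparison of $\beta_1^{1+\theta_1}$ with $\beta_2$ and with $\beta_2^{1+\theta_2}$, exactly as in Theorem~\ref{thm2}. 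Since the combinatorics here are identical to the proof of Theorem~\ref{thm2} once the decoupling is in place, I would phrase this as: \emph{$G$ is contained in a set of the same form as $F$ with $f_i$ replaced by suitable Lipschitz functions of one variable and $\tau_i$ replaced by $\tau_i\pm o(1)$, whence $\dim_{\rm H}G\le\dim_{\rm H}F$} and then invoke Theorem~\ref{thm2} together with continuity of the dimension formula in $\theta_1,\theta_2$.

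For the \textbf{lower bound} I would build an explicit Cantor subset $\mathcal{C}\subseteq G$ and apply the mass distribution principle. The construction proceeds along a sparse sequence $n_1<n_2<\cdots$ realizing the $\liminf$ defining $\theta_1,\theta_2$ (and, since $\tau_i$ are only continuous and non-constant, one localizes near a point where $\tau_i$ attains its minimum so that $\tau_i(x)\approx\theta_i$ throughout the relevant cylinders). At stage $k$, having chosen a rectangle $I\times J$ of side lengths $\asymp\beta_1^{-n_{k-1}}\times\beta_2^{-n_{k-1}}$, I first choose, inside $I$, the sub-interval of those $x$ whose $T_{\beta_1}$-orbit at time $n_k$ lands within $\beta_1^{-n_k\tau_1(x)}$ of $g_1(x,y_0)$ for a reference point $y_0\in J$; by the decoupling estimate this choice is valid for \emph{all} $y$ in a suitable sub-rectangle of $J$, and symmetrically for the $y$-direction. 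The mild but real technical points are: (i) the $\beta$-transformation has the "full-cylinder" irregularity (not every rank-$n$ cylinder has full length $\beta^{-n}$), which is handled exactly as in \cite{ShenWang13,TanWang11,WWLLL20} by passing to a subsequence of full cylinders, costing only $o(n)$ in the exponents; (ii) one must verify that after removing the part of a cylinder near its right endpoint (where $T_{\beta}^n$ is not the full branch) enough measure survives; and (iii) the two coordinates must be nested compatibly so that the reference points $y_0$ (resp.\ $x_0$) used in the $x$- (resp.\ $y$-) selection remain inside the surviving rectangle — here the strict inequalities $\beta_2>\beta_1^{\kappa_1}$, $\beta_1>\beta_2^{\kappa_2}$ give the needed slack.

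\textbf{The main obstacle} I anticipate is not the dimension bookkeeping — that is inherited wholesale from Theorem~\ref{thm2} — but making the simultaneous decoupling rigorous: one is choosing $x$ depending on $y$ and $y$ depending on $x$ at the same scale $n_k$, so a naive fixed-point argument risks circularity. The resolution is scale separation in the Cantor construction: at stage $k$ one freezes the reference point in the \emph{coarser} coordinate (using the $(k-1)$-st rectangle, whose size $\beta^{-n_{k-1}}$ is negligible against the stage-$k$ radius $\beta^{-n_k\theta_i}$), makes the $x$-selection, and only then refines the $y$-coordinate against the just-chosen $x$; because $g_1,g_2$ are Lipschitz and $n_k\gg n_{k-1}$, the error incurred by having used the frozen reference rather than the final point is $O(\beta^{-n_{k-1}})$, which is swallowed by the target radius. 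Verifying that this can be done consistently across \emph{all} stages, and that the resulting Cantor measure has the right local dimension, is the real content; once it is done, the mass distribution principle yields $\dim_{\rm H}G\ge\dim_{\rm H}F$ and the proof is complete.
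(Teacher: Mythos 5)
Your core idea --- decouple the cross-variable dependence of $g_1,g_2$ via their Lipschitz constants, using $\beta_2>\beta_1^{\kappa_1}$ and $\beta_1>\beta_2^{\kappa_2}$ to absorb the resulting error into the target radii, and then run the Theorem~\ref{thm2} machinery --- is exactly what the paper does, so the proposal is essentially correct. The differences are worth recording. First, the paper performs the decoupling at a \emph{single} scale $n$: for each fixed pair of order-$n$ cylinders $I_{n,\beta_1}(w)\times I_{n,\beta_2}(v)$ it replaces $g_1(x,y)$ by $g_1(x,a^2_{n,v})$, where $a^2_{n,v}$ is the left endpoint of $I_{n,\beta_2}(v)$, incurring an error $L\beta_2^{-n}\le L\beta_1^{-n\kappa_1}$; the target point $x_{n,w,v}$ (from Lemma~\ref{lemxnw}(ii)) and $y_{n,w,v}$ then depend only on the indices $(w,v)$, not on the actual point $(x,y)$, so the circularity you flag as the ``main obstacle'' never arises and your multi-scale device (freezing the reference in the $(k-1)$-st rectangle along a sparse sequence $n_k$) is unnecessary. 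Second, for the lower bound the paper does not build a bespoke Cantor set with the mass distribution principle; it feeds the rectangles $\tilde J_{n,1}(w,v)\times\tilde J_{n,2}(w,v)$ directly into the Wang--Wu mass transference principle for rectangles (Lemma~\ref{lemmtp}), with local ubiquity supplied by the full-cylinder count of Lemma~\ref{lemn1full} --- this packages all of the bookkeeping you describe in points (i)--(iii). Your route would also work but reproves that machinery by hand. One genuine imprecision to fix: you assert that $g_1$ varies by at most $L(\beta_1^{-n}+\beta_2^{-n})$ over the rectangle and that this is negligible against $\beta_1^{-n\tau_1(x)}$; the $\beta_1^{-n}$ term is \emph{not} negligible when $\tau_1\ge 1$ (nothing in the hypotheses excludes this), so you must freeze only the $y$-variable of $g_1$ and retain $x\mapsto g_1(x,a^2_{n,v})$ as a Lipschitz function of $x$, handling its $x$-variation through the expansion estimate $|T_{\beta_1}^nx-T_{\beta_1}^nx'|\ge(\beta_1^n-L)|x-x'|$ exactly as in the proof of Theorem~\ref{thm2}; only the cross-variable error $L\beta_2^{-n}$ needs (and gets) the hypothesis $\beta_2>\beta_1^{\kappa_1}$. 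The same remark applies symmetrically to $g_2$.
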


The paper is organized as follows. In Section~\ref{S2}, we give some preliminaries which  include some elementary properties of beta transformations, and a special version of the mass transference principle from rectangles to rectangles recently proved by Wang and Wu \cite{WBWWJ21}. In Section \ref{S3}, we prove Theorem \ref{thm1}. In Section \ref{S4}, we first give the proof of Theorem \ref{thm2} and then show how to modify it to prove Theorem \ref{thm3}.

\section{Preliminaries}\label{S2}

\subsection{Properties of beta transformations}

For $\beta>1$, let $T_{\beta}$ be the $\beta$-transformation on $[0,1]$ defined by  
\[T_{\beta}x=\beta x-\lfloor \beta x\rfloor,\]
where $\lfloor \cdot \rfloor$ denotes the integral part of a real number. Then every $x\in [0,1]$ can be expressed uniquely as a finite or infinite series 
\begin{equation}\label{eqbetaexp}
x=\frac{\epsilon_1(x,\beta)}{\beta}+\frac{\epsilon_2(x,\beta)}{\beta^{2}}+\cdots+\frac{\epsilon_n(x,\beta)}{\beta^n}+\cdots,
\end{equation}
where  for $n\geq 1$, $\epsilon_n(x,\beta)=\lfloor \beta T_{\beta}^{n-1}x \rfloor$. The expression \eqref{eqbetaexp} or the sequence 
\[(\epsilon_1(x,\beta), \epsilon_2(x,\beta),\ldots)\]
is called the {\em $\beta$-expansion of $x$}, and for $n\geq 1$, $\epsilon_n(x,\beta)$ is called the $n$-th digit of $x$ (with respect to  base $\beta$).  

Note that for every $x\in [0,1]$, all its digits $\epsilon_n(x,\beta)$ are in $\{0,1,\ldots, \lceil \beta-1\rceil\}$, where $\lceil \beta-1 \rceil$ is the smallest integer not less than $\beta-1$. However, not every sequence over $\{0,1,\ldots, \lceil \beta-1\rceil\}$ is the $\beta$-expansion for some $x\in [0,1]$. We call a  finite or an infinite sequence $(\epsilon_1,\epsilon_2,\ldots)$  {\em admissible}, if there exists an $x\in [0,1]$ such  that the $\beta$-expansion of $x$ begins with $(\epsilon_1,\epsilon_2,\ldots)$. 

For $n\geq 1$, let $\Sigma_{\beta}^n$ be the set of all admissible sequences of length $n$. For the cardinality of $\Sigma_{\beta}^n$, one has the following well-known result due to R{\'e}nyi.

\begin{lem}\cite{Renyi57}\label{lemcard}
Let $\beta>1$. Then for any $n\geq 1$, 
\[\beta^n\leq \#\Sigma_{\beta}^n\leq \frac{\beta^{n+1}}{\beta-1},\]
where $\#$ denotes the cardinality of a finite set.  
\end{lem}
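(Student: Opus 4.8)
The plan is to translate the combinatorial count of admissible words into a geometric statement about the cylinder intervals they define, and then exploit the fact that $T_\beta$ multiplies length by $\beta$ on each branch of linearity. For an admissible word $(\epsilon_1,\dots,\epsilon_n)$ I would write $I_n(\epsilon_1,\dots,\epsilon_n)$ for the associated cylinder, i.e. the set of $x\in[0,1)$ whose first $n$ digits agree with it; these cylinders are pairwise disjoint and cover $[0,1)$, and on each of them $T_\beta^n$ is affine with slope $\beta^n$. The one structural fact I would establish first — and which I expect to be the only genuinely geometric point — is that the left endpoint of $I_n(\epsilon_1,\dots,\epsilon_n)$ is exactly the point $\sum_{i=1}^n\epsilon_i\beta^{-i}$, whose $\beta$-expansion continues with zeros and which $T_\beta^n$ therefore sends to $0$. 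Since $T_\beta^n$ is increasing and affine with slope $\beta^n$ on the cylinder, its image is an interval $[0,r)$ with $0<r\le 1$, and consequently $|I_n(\epsilon_1,\dots,\epsilon_n)|=r\,\beta^{-n}$ for some $r=r(\epsilon_1,\dots,\epsilon_n)\in(0,1]$.

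The lower bound is then immediate. Summing lengths over the partition of $[0,1)$ gives $1=\sum r\,\beta^{-n}$, so $\sum r=\beta^n$, where both sums run over the words in $\Sigma_\beta^n$. Since each $r\le 1$, there must be at least $\beta^n$ summands, that is $\#\Sigma_\beta^n\ge\beta^n$. I would record the identity $\sum r=\beta^n$ for use in the upper bound.

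For the upper bound I would set up a one-step recursion by counting extensions. Fix a level-$n$ cylinder with image $[0,r)$. A length-$(n+1)$ extension $w\epsilon$ is admissible precisely when $I_{n+1}(w\epsilon)\neq\emptyset$, and since $T_\beta^n$ maps $I_n(w)$ bijectively onto $[0,r)$, this happens exactly for those level-$1$ cylinders $[k/\beta,(k+1)/\beta)$ that meet $[0,r)$, i.e. for the integers $k\ge 0$ with $k/\beta<r$. There are $\lceil\beta r\rceil\le \beta r+1$ of them. Summing over all words of $\Sigma_\beta^n$ and inserting $\sum r=\beta^n$ yields $\#\Sigma_\beta^{n+1}\le \beta\sum r+\#\Sigma_\beta^n=\beta^{n+1}+\#\Sigma_\beta^n$. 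Telescoping from the empty word, where $\#\Sigma_\beta^0=1$, gives $\#\Sigma_\beta^n\le 1+\sum_{k=1}^n\beta^k=1+\frac{\beta^{n+1}-\beta}{\beta-1}$, which equals $\frac{\beta^{n+1}}{\beta-1}+\bigl(1-\frac{\beta}{\beta-1}\bigr)$; as $1-\frac{\beta}{\beta-1}=\frac{-1}{\beta-1}<0$, this is at most $\frac{\beta^{n+1}}{\beta-1}$, as required.

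The only delicate step is the structural claim that every cylinder image is anchored at $0$; I would prove it either by the explicit left-endpoint computation above or, equivalently, by observing that on $I_n(w)$ one has $T_\beta^n x=\beta T_\beta^{n-1}x-\epsilon_n$ with $\beta T_\beta^{n-1}x\in[\epsilon_n,\epsilon_n+1)$, so the image lies in $[0,1)$ with infimum $0$. Once that is in hand, both bounds reduce to elementary summation: the lower bound is a direct consequence of $\sum r=\beta^n$ with $r\le 1$, and the upper bound follows from the child-counting estimate $\lceil\beta r\rceil\le\beta r+1$ combined with the same identity and a one-line geometric-series telescoping.
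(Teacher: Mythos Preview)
Your argument is correct. The paper does not prove this lemma at all; it is quoted from R\'enyi \cite{Renyi57} without proof, so there is no approach in the paper to compare against. Your route via the identity $\sum_{w\in\Sigma_\beta^n} r(w)=\beta^n$ together with the child-count estimate $\lceil\beta r\rceil\le\beta r+1$ is a clean and standard way to obtain both inequalities.
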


For any $n\geq 1$ and $w=(\epsilon_1,\ldots, \epsilon_n)\in \Sigma_{\beta}^n$, the set 
\[I_{n,\beta}(w)=\left\{x\in [0,1]: \epsilon_i(x,\beta)=\epsilon_i, 1\leq i\leq n\right\}\]
is called  {\em a cylinder of order $n$} (with respect to base $\beta$), which is a left-closed and right-open interval of length at most $\beta^{-n}$ with left endpoint 
\[\frac{\epsilon_1}{\beta}+\frac{\epsilon_2}{\beta^2}+\cdots+\frac{\epsilon_n}{\beta^n}.\]
All cylinders of order $n$ form a partition of the unit interval. That is, 
\begin{equation}\label{eqpartion01}
[0,1]=\bigcup_{w\in \Sigma_{\beta}^n}I_{n,\beta}(w)
\end{equation}
and a disjoint union. 

The following notion  plays an important role in the study of metric properties of $\beta$-expansions, which is also needed in this paper. 
\begin{de}[Full word and full cylinder]
A word $w=(\epsilon_1,\ldots, \epsilon_n)\in\Sigma_{\beta}^n$ is called a full word and $I_{n,\beta}(w)$ is called a full cylinder if 
\[|I_{n,\beta}(w)|=\frac{1}{\beta^n},\]
where $|A|$ denotes the diameter of a set $A$.  
\end{de}

The following property of full cylinders is   important in the  proofs of Theorem \ref{thm2}. 

\begin{lem}\cite[Theorem 1.2]{YBBWW14}\label{lemn1full}
For each $n\geq 1$, there exists at least one full cylinder  among every $n+1$ consecutive cylinders of order $n$.
\end{lem}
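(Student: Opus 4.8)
The statement is equivalent to the assertion that there are never $n+1$ \emph{consecutive} non-full cylinders of order $n$, so the plan is to bound the length of any run of consecutive non-full cylinders by $n$. (If $\beta\in\N$ every cylinder of order $n$ has length $\beta^{-n}$ and is full, so the claim is trivial; I therefore assume $\beta\notin\N$.)

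First I would record the geometric meaning of fullness. On each cylinder $I_{n,\beta}(w)$ the map $T_\beta^n$ is affine with slope $\beta^n$ and sends the left endpoint to $0$, so $T_\beta^n(I_{n,\beta}(w))=[0,r_w)$ with $r_w:=\beta^n|I_{n,\beta}(w)|\in(0,1]$, and $w$ is full exactly when $r_w=1$. Appending a digit $d$ gives the recursion $r_{wd}=\min\{\beta r_w-d,\,1\}$, the admissible digits being those with $\beta r_w-d>0$. The decisive consequence is that, when $\beta r_w\notin\Z$, the child $wd$ is full for every digit $d\le \beta r_w-1$ and is non-full only for the single largest digit $d=\lceil\beta r_w\rceil-1$ (when $\beta r_w\in\Z$ all children are full). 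Thus a cylinder of order $n$ is non-full if and only if it is the largest-digit child of its parent, and ordering the cylinders of a given order from left to right (equivalently, the admissible words lexicographically) is compatible with the digit order.

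Next I would attach to each non-full word $w=(\epsilon_1,\dots,\epsilon_n)$ its last reset level $j(w):=\max\{j\ge 0: r_{(\epsilon_1,\dots,\epsilon_j)}=1\}<n$. Since the parameters at levels $j(w)+1,\dots,n$ all stay below $1$, the recursion forces each of the digits $\epsilon_{j(w)+1},\dots,\epsilon_n$ to be the (then) largest admissible digit; in particular none of them can be increased. Writing $\ell(w):=n-j(w)\in\{1,\dots,n\}$, the goal reduces to the following monotonicity: along a maximal run $w_0\prec w_1\prec\cdots\prec w_{m-1}$ of consecutive non-full cylinders the integers $j(w_i)$ are strictly decreasing, equivalently $\ell(w_0)<\ell(w_1)<\cdots<\ell(w_{m-1})$. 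Granting this, $1\le \ell(w_0)<\cdots<\ell(w_{m-1})\le n$ forces $m\le n$, which is exactly the assertion.

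The heart of the argument, and the step I expect to be the main obstacle, is the monotonicity $j(w_{i+1})<j(w_i)$, which I would prove through an explicit description of the successor map on admissible words. Because all digits of $w_i$ beyond level $j(w_i)$ are maximal, the lexicographic successor $w_{i+1}$ must be produced by a carry at some position $p\le j(w_i)$: one increments the digit at the highest position $p$ that still admits a larger digit and then fills positions $p+1,\dots,n$ with the smallest admissible digit $0$. Since $w_{i+1}$ is again non-full, its parameter never returns to $1$; hence the incremented prefix already has parameter strictly below $1$ and the ensuing zeros cannot restore it to $1$, so the last reset of $w_{i+1}$ occurs strictly before position $p$, giving $j(w_{i+1})\le p-1<j(w_i)$. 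The technical care needed here is (i) to treat the borderline cases $\beta r\in\Z$, where a nominally maximal digit in fact resets the parameter to $1$ and terminates a run earlier (this is precisely the phenomenon caused by a finite $\beta$-expansion of $1$, and it only helps the bound), and (ii) to justify rigorously that the carry lands at $p\le j(w_i)$ and that the trailing zeros never re-fill the cylinder. Once these are in place the run bound, and hence the lemma, follow.
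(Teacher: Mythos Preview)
The paper does not supply its own proof of this lemma: it is quoted verbatim as \cite[Theorem 1.2]{YBBWW14} and used as a black box, so there is nothing in the present paper to compare your argument against.

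That said, your strategy is a correct route to the result and is close in spirit to the original Bugeaud--Wang argument. The key observations --- that a word is non-full precisely when its last digit is the maximal admissible one for its prefix, and that therefore the ``last full prefix'' level $j(w)$ strictly drops when you pass from one non-full word to its lexicographic successor --- do combine to bound any run of non-full order-$n$ cylinders by $n$. One point in your sketch deserves a cleaner statement: the sentence ``Since $w_{i+1}$ is again non-full, its parameter never returns to $1$; hence \ldots\ the ensuing zeros cannot restore it to $1$'' reads as circular. The actual logic is the contrapositive: appending a $0$ to a full prefix keeps it full, so if \emph{any} intermediate prefix of $w_{i+1}$ at level $\ge p$ were full, then $w_{i+1}$ itself would be full; since it is not, all those prefixes have parameter $<1$, whence $j(w_{i+1})\le p-1<j(w_i)$. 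With that clarification (and the routine handling of the $\beta r\in\Z$ boundary case you flag), the proof goes through.
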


We will also need the following property of a Lipschitz function on full cylinders.

\begin{lem}\cite[Lemma 3.1]{WangW18}\label{lemxnw}
Let $f:[0,1]\to [0,1]$ be a Lipschitz function. Then for any $n\geq 1$ and full word  $w=(\epsilon_1,\ldots,\epsilon_n)\in \Sigma_{\beta}^n$,  the following hold. 

{\rm (i)} There exists a point $x^*_{n,w}$ in the closure of $I_{n,\beta}(w)$ such that $T_{\beta}^nx^*_{n,w}=f(x^*_{n,w})$ when $x^*_{n,w}\in I_{n,\beta}(w)$, and  $f(x^*_{n,w})=1$ when 
\[x^*_{n,w}=\frac{\epsilon_1}{\beta}+\frac{\epsilon_2}{\beta^2}+\cdots+\frac{\epsilon_n}{\beta^n}+\frac{1}{\beta^n}.\]

{\rm (ii)} For any $\epsilon>0$ there exists a point $x_{n,w}\in I_{n,\beta}(w)$ such that 
\[|T_{\beta}^nx_{n,w}-f(x_{n,w})|<\epsilon.\]
\end{lem}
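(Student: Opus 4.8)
The plan is to exploit the fact that on a full cylinder the map $T_\beta^n$ is affine with an explicitly controlled range, and then to apply the intermediate value theorem to the function $x\mapsto T_\beta^nx-f(x)$. First I would record the linearisation of $T_\beta^n$ on a full cylinder. Writing $a=\frac{\epsilon_1}{\beta}+\cdots+\frac{\epsilon_n}{\beta^n}$ for the left endpoint of $I_{n,\beta}(w)$, an induction on the defining relation $T_\beta x=\beta x-\epsilon_1(x,\beta)$ gives $T_\beta^nx=\beta^n(x-a)$ for every $x\in I_{n,\beta}(w)$. Since $w$ is a full word, the cylinder is exactly $[a,a+\beta^{-n})$, so $T_\beta^n$ maps it affinely and bijectively onto $[0,1)$; its continuous affine extension $\phi(x):=\beta^n(x-a)$ to the closure $[a,a+\beta^{-n}]$ sends the left endpoint to $0$ and the right endpoint $a+\beta^{-n}$ to $1$.

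For part (i), I would set $h(x)=\phi(x)-f(x)$ on $[a,a+\beta^{-n}]$. Because $f$ is Lipschitz (hence continuous) and $\phi$ is affine, $h$ is continuous. At the endpoints, $h(a)=-f(a)\leq 0$ and $h(a+\beta^{-n})=1-f(a+\beta^{-n})\geq 0$, using $0\leq f\leq 1$. The intermediate value theorem then produces $x^*_{n,w}\in[a,a+\beta^{-n}]$ with $h(x^*_{n,w})=0$. If $x^*_{n,w}$ lies in the half-open cylinder $I_{n,\beta}(w)$, then $\phi(x^*_{n,w})=T_\beta^nx^*_{n,w}$ and the equality $T_\beta^nx^*_{n,w}=f(x^*_{n,w})$ holds; the only remaining possibility is $x^*_{n,w}=a+\beta^{-n}$, in which case $\phi(x^*_{n,w})=1$ forces $f(x^*_{n,w})=1$. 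This is precisely the dichotomy in the statement.

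For part (ii), I would deduce it from the continuity of $h$ near the zero $x^*_{n,w}$. If $x^*_{n,w}$ is interior to the cylinder, one simply takes $x_{n,w}=x^*_{n,w}$, for which $|T_\beta^nx_{n,w}-f(x_{n,w})|=0<\epsilon$. If instead $x^*_{n,w}=a+\beta^{-n}$, continuity of $h$ yields $\delta>0$ with $|h(x)|<\epsilon$ on $(a+\beta^{-n}-\delta,a+\beta^{-n}]$; choosing any $x_{n,w}$ in this interval with $x_{n,w}<a+\beta^{-n}$ keeps it inside the half-open cylinder, where $\phi=T_\beta^n$, so that $|T_\beta^nx_{n,w}-f(x_{n,w})|=|h(x_{n,w})|<\epsilon$.

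The only genuinely delicate point, and the one that dictates the precise wording of the lemma, is the half-open nature of the cylinder: $T_\beta^n$ is literally defined only on $[a,a+\beta^{-n})$ and jumps at the right endpoint, so the intermediate value theorem must be run on the continuous extension $\phi$ rather than on $T_\beta^n$ itself, and the boundary case $x^*_{n,w}=a+\beta^{-n}$ has to be carved out separately. Everything else reduces to the affine linearisation on full cylinders together with a standard application of the intermediate value theorem.
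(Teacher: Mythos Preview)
Your proof is correct and matches the paper's approach: the paper cites part~(i) to \cite{WangW18} without reproducing the argument, and derives~(ii) from~(i) exactly as you do, handling the boundary case $x^*_{n,w}=a+\beta^{-n}$ by continuity (the paper splits this into separate $\epsilon/2$ bounds for $|f(x_{n,w})-1|$ and $|T_\beta^n x_{n,w}-1|$, whereas you bundle both into the single continuous function $h=\phi-f$, but this is the same argument). Your explicit intermediate-value-theorem proof of~(i) via the affine extension $\phi$ is the standard one and is precisely what the cited reference does.
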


\begin{proof}
The part {\rm (i)}  was proved in \cite[Lemma 3.1]{WangW18}. Here we only show {\rm (ii)} by using {\rm (i)}. Let $x^*_{n,w}$ be provided in {\rm (i)}. If $x^*_{n,w}\in I_{n,\beta}(w)$, then  {\rm (ii)} holds trivially by simply taking $x_{n,w}=x^*_{n,w}$. For the other case that 
\[x^*_{n,w}=\frac{\epsilon_1}{\beta}+\frac{\epsilon_2}{\beta^2}+\cdots+\frac{\epsilon_n}{\beta^n}+\frac{1}{\beta^n},\]
by the continuity of $f$ on $[0,1]$ and $T_{\beta}^n$ on $I_{n,\beta}(w)$, we can take $x_{n,w}\in I_{n,\beta}(w)$ to be a point which is sufficiently close to $x^*_{n,w}$ such that 
\[|f(x_{n,w})-f(x^*_{n,w})|=|f(x_{n,w})-1|<\epsilon/2,\]
\[|T_{\beta}^nx_{n,w}-1|=1-T_{\beta}^nx_{n,w}<\epsilon/2.\]
Then it follows that 
\[|T_{\beta}^nx_{n,w}-f(x_{n,w})|\leq|f(x_{n,w})-1|+|T_{\beta}^nx_{n,w}-1|<\epsilon.\]
\end{proof}

\subsection{Mass transference principle from rectangles to rectangles, a special version.}

Let $\R^+$ be the set of positive numbers. 
For $x=(x_1,\ldots, x_d)\in \R^d$, $\mathbf{a}=(a_1,\ldots, a_d)\in (\R^+)^d$ and $r>0$, let 
\[B(x,r^{\mathbf{a}})=\prod_{i=1}^dB(x_i, r^{a_i}).\]

In the proofs of Theorem \ref{thm2},  we will make use of a recent result of  Wang and Wu \cite{WBWWJ21}, called the mass transference principle from rectangles to rectangles in Diophantine approximation. For our purpose, we only need the following special version of Wang and Wu's result. 
\begin{lem}\label{lemmtp}\cite[Theorems 3.1-3.2]{WBWWJ21}
Let $\{J_n\}_{n\geq1}$ be a sequence of finite index sets,  $\{x_{n,\alpha}: n\geq 1,\alpha\in J_n\}$ be a sequence of points in $[0,1]^d$ and $\{r_n\}_{n\geq 1}$ be a non-increasing sequence of positive numbers tending to $0$. Let $\mathbf{a}=(a_1,\ldots, a_d), \mathbf{t}=(t_1,\ldots, t_d)\in (\mathbb{R}^+)^d$.  Set 
\[W(\mathbf{t})=\left\{x\in [0,1]^d: x\in B(x_{n,\alpha},r_n^{\mathbf{a}+\mathbf{t}})\text{ for infinitely many }n\in\N\text{ and }\alpha\in J_n\right\}.\] If for all large $n$, the set
\begin{equation}\label{eqlocalubiquity}
\left\{x\in [0,1]^d: x\in \bigcup_{\alpha\in J_n}B(x_{n,\alpha}, r_n^{\mathbf{a}})\right\}
\end{equation} 
is of full Lebesgue measure, then for any ball $B\subset [0,1]^d$, 
\[\dim_{\rm H}B\cap W(\mathbf{t})\geq \min_{A\in \mathcal{A}}\left\{\#\mathcal{K}_1+\#\mathcal{K}_2+\frac{\sum_{k\in\mathcal{K}_3}a_k-\sum_{k\in\mathcal{K}_2}t_k}{A}\right\},\]
where 
\[\mathcal{A}=\{a_i,a_i+t_i: 1\leq i\leq d\},\]
and for each $A\in \mathcal{A}$, the sets $\mathcal{K}_1,\mathcal{K}_2,\mathcal{K}_3$ form a partition of $\{1,\ldots,d\}$ defined as 
\[\mathcal{K}_1=\{k:a_k\geq A\},\ \mathcal{K}_2=\{k: a_k+t_k\leq A\}\setminus \mathcal{K}_1,\ \mathcal{K}_3=\{1,\ldots,d\}\setminus(\mathcal{K}_1\cup\mathcal{K}_2).\]
\end{lem}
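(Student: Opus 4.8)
\medskip
\noindent\emph{Sketch of the intended argument.} Lemma~\ref{lemmtp} is quoted from \cite{WBWWJ21}; the plan for proving it is the classical route of building a Cantor subset of $W(\mathbf{t})$, equipping it with a suitable mass distribution, and invoking the mass distribution principle. Fix a ball $B\subset[0,1]^d$ and write $s$ for the right-hand side $\min_{A\in\mathcal{A}}\{\#\mathcal{K}_1+\#\mathcal{K}_2+(\sum_{k\in\mathcal{K}_3}a_k-\sum_{k\in\mathcal{K}_2}t_k)/A\}$. It suffices to produce a compact set $\mathcal{C}\subseteq B\cap W(\mathbf{t})$ and a probability measure $\mu$ supported on $\mathcal{C}$ with $\mu(B(x,\rho))\lesssim\rho^{s}$ for all $x$ and all small $\rho$, for then $\dim_{\rm H}(B\cap W(\mathbf{t}))\ge\dim_{\rm H}\mathcal{C}\ge s$ by the mass distribution principle.

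The construction starts from the full-measure hypothesis \eqref{eqlocalubiquity}: for every large $n$ the fat rectangles $B(x_{n,\alpha},r_n^{\mathbf{a}})$, $\alpha\in J_n$, cover $[0,1]^d$ up to a null set. From such a full cover of any given region one extracts, by a Vitali-type covering argument, a finite pairwise disjoint subfamily whose union occupies a fixed positive proportion of that region; this plays the role of the local ubiquity input. Iterating this extraction along a rapidly increasing sequence of levels $n_1<n_2<\cdots$ (chosen so that consecutive scales $r_{n_k}$ are widely separated, and with the step-$1$ region taken to be $B$), and at step $k$ retaining the \emph{shrunken} rectangles $B(x_{n_k,\alpha},r_{n_k}^{\mathbf{a}+\mathbf{t}})$ sitting inside the disjoint fat rectangles selected within the components surviving from step $k-1$, yields a nested Cantor set $\mathcal{C}$. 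Every point of $\mathcal{C}$ lies in infinitely many shrunken rectangles, so $\mathcal{C}\subseteq B\cap W(\mathbf{t})$. On $\mathcal{C}$ one defines $\mu$ recursively, splitting the mass of each level-$(k-1)$ component equally among its level-$k$ children, up to the bounded multiplicative errors coming from the covering count.

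The crux, and the step I expect to be the main obstacle, is the scale-by-scale estimate $\mu(B(x,\rho))\lesssim\rho^{s}$. Because the generic rectangle is a product of balls of \emph{different} radii $r_n^{a_i+t_i}$, the measure of a ball $B(x,\rho)$ depends on how $\rho$ compares to the two families of side-lengths $\{r_n^{a_k}\}$ and $\{r_n^{a_k+t_k}\}$. Writing $\rho\approx r_n^{A}$, the coordinates split exactly into $\mathcal{K}_1$ (fat side already below $\rho$, $a_k\ge A$), $\mathcal{K}_2$ (shrunken side still above $\rho$, $a_k+t_k\le A$), and the intermediate set $\mathcal{K}_3$; counting the level-$n$ rectangles met by the ball and multiplying by the mass each carries reproduces the exponent $\#\mathcal{K}_1+\#\mathcal{K}_2+(\sum_{k\in\mathcal{K}_3}a_k-\sum_{k\in\mathcal{K}_2}t_k)/A$ at that scale. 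This estimate must be verified \emph{uniformly} over all $x$, all small $\rho$, and in particular over the transitional ranges of $\rho$ falling between two consecutive generations, which is where the delicate bookkeeping lies. Finally, regarded as a function of the scale exponent $A$, this local exponent is piecewise of the form $C+D/A$ and hence monotone between its breakpoints; consequently its infimum over all $A>0$ is attained at one of the values $a_i$ or $a_i+t_i$, that is, at some $A\in\mathcal{A}$. This identifies the infimum over scales of the local exponent with $s$, and the mass distribution principle then delivers $\dim_{\rm H}(B\cap W(\mathbf{t}))\ge s$. The complete argument, with all constants and the full multi-scale analysis, is carried out in \cite{WBWWJ21}.
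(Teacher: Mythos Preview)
The paper does not prove Lemma~\ref{lemmtp} at all: it is stated as a citation of \cite[Theorems~3.1--3.2]{WBWWJ21}, and the only accompanying text is the remark that the full-measure condition~\eqref{eqlocalubiquity} is what guarantees the local-ubiquity-for-rectangles hypothesis of that reference. In other words, the paper treats the lemma as a black box.

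Your proposal goes further than the paper by sketching the underlying argument from \cite{WBWWJ21}: the Cantor-set construction via iterated Vitali-type extraction from the ubiquitous fat rectangles, the recursively defined mass distribution, and the scale-by-scale H\"older estimate whose worst exponent over $A>0$ is attained at some $A\in\mathcal{A}$. This outline is accurate and matches the strategy actually used in the cited source, so there is no gap in your reasoning; but since the paper itself offers no proof, your sketch is strictly additional content rather than a comparison point. If your aim is only to match the paper, a one-line citation together with the observation that \eqref{eqlocalubiquity} implies the required local ubiquity would suffice.
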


We remark that the condition \eqref{eqlocalubiquity}  above  is to guarantee the  local ubiquity for rectangles (\cite[Definition 3.2]{WBWWJ21}) needed in \cite[Theorems 3.1-3.2]{WBWWJ21}.

\section{Proof of Theorem \ref{thm1}}\label{S3}

We first prove  the Lebesgue measure part of the theorem.  For $y\in [0,1]$, let 
\begin{equation*}\label{eqDy}
D_y=\left\{x\in [0,1]: |T_{\beta}^nx-f(x,y)|<\varphi(n)\text{ for infinitely many }n\in \N\right\}.
\end{equation*}
Since $f:[0,1]^2\to [0,1]$ is a Lipschitz function, so is the function $x\mapsto f(x,y)$ for every $y\in [0,1]$. Hence by \cite[Theorem 1.6]{FLBWJW21},  for every $y\in [0,1]$,  
\[\mathcal{L}(D_y)=\begin{cases}0 & \text{ if }\sum_{n=1}^{\infty}\varphi(n)<\infty,\\
1& \text{ if }\sum_{n=1}^{\infty}\varphi(n)=\infty.
\end{cases}\]
Then by the Fubini's theorem,
\[\mathcal{L}^2(W_{\beta}(f,\varphi))=\int_{0}^1\int_0^1\mathbbm{1}_{W_{\beta}(f,\varphi)}((x,y))dxdy=\int_0^1\int_0^1\mathbbm{1}_{D_y}(x)dxdy=\int_0^1\mathcal{L}(D_y)dy,\]
where $\mathbbm{1}_A$ denotes the characteristic function of a set $A$. Therefore, we have 
\[\mathcal{L}^2(W_{\beta}(f,\varphi))=\begin{cases}0 & \text{ if } \sum_{n=1}^{\infty}\varphi(n)<\infty,\\
1& \text{ if } \sum_{n=1}^{\infty}\varphi(n)=\infty.
\end{cases}\]
This proves the Lebesgue measure part of the theorem. 

Next we prove the Hausdorff dimension part. Suppose that $\sum_{n=1}^{\infty}\varphi(n)<\infty$. Then without loss of generality we can assume that $\varphi(n)<1$ for all $n\in\N$. Again since for every $y\in [0,1]$,  the function $x\mapsto f(x,y)$ is Lipschitz,  it follows from \cite[Theorem 1.1]{WangW18} that for any $y\in [0,1]$, 
\[\dim_{\rm H}D_{y}=\frac{1}{1+\alpha}\quad \text{ with }\alpha=\liminf_{n\to\infty}\frac{\log_{\beta}\varphi(n)^{-1}}{n}.\]
Then by  \cite[Corollary 7.12]{falconer1}, 
\[\dim_{\rm H}W_{\beta}(f,\varphi)\geq 1+\frac{1}{1+\alpha}.\]
In the following we prove the converse inequality.

For $n\in\N$, let 
\[W_n=\left\{(x,y)\in [0,1]^2: |T_{\beta}^nx-f(x,y)|<\varphi(n)\right\}.\]
Then 
\begin{equation}\label{eqWbetaflims}
W_{\beta}(f,\varphi)=\bigcap_{N=1}^{\infty}\bigcup_{n=N}^{\infty}W_n.
\end{equation}
Let 
\[J_{n,\beta}(k)=\left[\frac{k\varphi(n)}{\beta^n}, \frac{(k+1)\varphi(n)}{\beta^n}\right]\cap [0,1], \quad \text{ for } k=0,1,\ldots, \left\lfloor\frac{\beta^n}{\varphi(n)}\right\rfloor.\]
Then clearly 
\[[0,1]=\bigcup_{0\leq k\leq \left\lfloor\frac{\beta^n}{\varphi(n)}\right\rfloor}J_{n,\beta}(k).\]
Hence we have 
\[[0,1]^2=\bigcup_{w\in \Sigma_{\beta}^n}\bigcup_{0\leq k\leq \left\lfloor\frac{\beta^n}{\varphi(n)}\right\rfloor}I_{n,\beta}(w)\times J_{n,\beta}(k).\]
Therefore,
\begin{equation}\label{eqWnthm1}
W_n=\bigcup_{w\in \Sigma_{\beta}^n}\bigcup_{0\leq k\leq \left\lfloor\frac{\beta^n}{\varphi(n)}\right\rfloor}\left\{(x,y)\in I_{n,\beta}(w)\times J_{n,\beta}(k): |T_{\beta}^nx-f(x,y)|<\varphi(n)\right\}.
\end{equation}

Since $f:[0,1]^2\to [0,1]$ is Lipschitz, there exists $L>0$ such that for any $x,y, x', y'\in [0,1]$,
\begin{equation}\label{eqflip}
|f(x,y)-f(x', y')|\leq L\|(x-x',y-y')\|, 
\end{equation}
where  $\|\cdot \|$ is the Euclidean norm. 
Let  $w\in \Sigma_{\beta}^n$, $0\leq k\leq \left\lfloor\frac{\beta^n}{\varphi(n)}\right\rfloor$, and  $(x,y)\in I_{n,\beta}(\omega)\times J_{n,\beta}(k)$. If $(x,y)\in W_n$, then by \eqref{eqflip} we see that for large $n$,
\begin{align} 
|T_{\beta}^nx-f(x,k\varphi(n)/\beta^n)|&\leq |T_{\beta}^nx-f(x,y)|+|f(x,y)-f(x,k\varphi(n)/\beta^n)|\nonumber\\
&\leq \varphi(n)+\frac{L\varphi(n)}{\beta^n}\nonumber\\
&<2\varphi(n). \label{equpbd2vn}
\end{align}
Therefore, for large $n$, we have 
\begin{align*}
&\left\{(x,y)\in I_{n,\beta}(w)\times J_{n,\beta}(k): |T_{\beta}^nx-f(x,y)|<\varphi(n)\right\}\\
&\quad \subset \left\{x\in I_{n,\beta}(w): |T_{\beta}^nx-f(x,k\varphi(n)/\beta^n)|<2\varphi(n)\right\}\times J_{n,\beta}(k)\\
&\quad :=\tilde{I}_{n,\beta}(w,k)\times J_{n,\beta}(k),
\end{align*}
and thus by \eqref{eqWnthm1},
\begin{equation}\label{eqcoverWn}
W_n\subset \bigcup_{w\in \Sigma_{\beta}^n}\bigcup_{0\leq k\leq \left\lfloor\frac{\beta^n}{\varphi(n)}\right\rfloor}\tilde{I}_{n,\beta}(w,k)\times J_{n,\beta}(k).
\end{equation}

Below we estimate the diameter of $\tilde{I}_{n,\beta}(w,k)\times J_{n,\beta}(k)$. To this end, let $x_1,x_2\in \tilde{I}_{n,\beta}(w,k)$. Then by \eqref{equpbd2vn} and \eqref{eqflip}, for large $n$,  
\begin{align*}
4\varphi(n)&>|T_{\beta}^nx_1-f(x_1, k\varphi(n)/\beta^n)|+|T_{\beta}^nx_2-f(x_2, k\varphi(n)/\beta^n)|\\
&\geq |T_{\beta}^nx_1-T_{\beta}^nx_2|-|f(x_1, k\varphi(n)/\beta^n)-f(x_2, k\varphi(n)/\beta^n)|\\
& \geq (\beta^n-L)|x_1-x_2|\\
&\geq \frac{\beta^n}{2}|x_1-x_2|.
\end{align*}
This implies that for all large $n$, the diameter of $\tilde{I}_{n,\beta}(w,k)$ satisfies that 
 $$|\tilde{I}_{n,\beta}(w,k)|\leq \frac{8\varphi(n)}{\beta^n}.$$
As a consequence, for all large $n$, 
\[|\tilde{I}_{n,\beta}(w,k)\times J_{n,\beta}(k)|\leq \frac{9\varphi(n)}{\beta^n}. \]

By \eqref{eqWbetaflims} and \eqref{eqcoverWn}, for every $N\in\N$,  the family 
\[\left\{\tilde{I}_{n,\beta}(w,k)\times J_{n,\beta}(k): n\geq N, w\in \Sigma_{\beta}^n, 0\leq k\leq \left\lfloor\frac{\beta^n}{\varphi(n)}\right\rfloor\right\}\]
is a covering  of $W_{\beta}(f,\varphi)$. Recall that $\alpha=\liminf_{n\to\infty}\frac{\log_{\beta}\varphi(n)^{-1}}{n}$. Let $s>1+\frac{1}{1+\alpha}$. Then we have 
\begin{align*}
\mathcal{H}^s_{\infty}(W_{\beta}(f,\varphi))&\leq \liminf_{N\to\infty} \sum_{n\geq N}\sum_{w\in \Sigma_{\beta}^n}\sum_{0\leq k\leq \left\lfloor\frac{\beta^n}{\varphi(n)}\right\rfloor}|\tilde{I}_{n,\beta}(w,k)\times J_{n,\beta}(k)|^s\\
&\leq \liminf_{N\to\infty}\sum_{n\geq N}\sum_{w\in \Sigma_{\beta}^n}\sum_{0\leq k\leq \left\lfloor\frac{\beta^n}{\varphi(n)}\right\rfloor}\left(\frac{9\varphi(n)}{\beta^n}\right)^s\\
&\leq \liminf_{N\to\infty}\sum_{n\geq N}\frac{\beta^{n+1}}{\beta-1}\cdot\frac{2\beta^n}{\varphi(n)}\cdot \left(\frac{9\varphi(n)}{\beta^n}\right)^s \quad \text{(by Lemma \ref{lemcard})}\\
&=0,
\end{align*}
here and afterwards $\mathcal{H}^s_{\infty}$ denotes the $s$-dimensional  Hausdorff content \cite{BishopPeres17}. 
Since $s>1+\frac{1}{1+\alpha}$ is arbitrary, 
it follows that 
\[\dim_{\rm H}W_{\beta}(f,\varphi)\leq 1+\frac{1}{1+\alpha}.\]
This proves the Hausdorff dimension part and   completes the proof of Theorem \ref{thm1}.

\section{Proofs of Theorems \ref{thm2}-\ref{thm3}}\label{S4}

\subsection{Proof of Theorem \ref{thm2}}\label{subs1}

{\bf The upper bounds part}. Recall the formula \eqref{eqpartion01}. So for every $n\in \N$, 
\[[0,1]^2=\bigcup_{w\in \Sigma_{\beta_1}^n, v\in \Sigma_{\beta_2}^n}I_{n,\beta_1}(w)\times I_{n,\beta_2}(v).\]
For $w\in \Sigma_{\beta_1}^n$ and  $v\in \Sigma_{\beta_2}^n$, let 
\begin{equation}\label{eqJn1b1w}
J_{n,\beta_1}(w)=\left\{x\in I_{n,\beta_1}(w): |T_{\beta_1}^nx-f_1(x)|<\beta_1^{-n\tau_1(x)}\right\},
\end{equation}
\[J_{n,\beta_2}(v)=\left\{y\in I_{n,\beta_2}(v): |T_{\beta_2}^ny-f_2(y)|<\beta_2^{-n\tau_2(y)}\right\}.\]
Then we have 
\begin{align}
F&=\bigcap_{N=1}^{\infty}\bigcup_{n=N}^{\infty}\left\{(x,y)\in [0,1]^2:\begin{aligned}|T_{\beta_1}^nx-f_1(x)|<\beta_1^{-n\tau_1(x)}\\|T_{\beta_2}^ny-f_2(y)|<\beta_2^{-n\tau_2(y)}\end{aligned}\right\}\nonumber\\
&=\bigcap_{N=1}^{\infty}\bigcup_{n=N}^{\infty}\bigcup_{w\in \Sigma_{\beta_1}^n, v\in \Sigma_{\beta_2}^n}J_{n,\beta_1}(w)\times J_{n,\beta_2}(v).\label{eqWb1b2}
\end{align}

Fix $w\in \Sigma_{\beta_1}^n$ and  $v\in \Sigma_{\beta_2}^n$. In the following we estimate the diameters of $J_{n,\beta_1}(w)$ and $J_{n,\beta_2}(v)$. For $i=1,2$, let $\theta_{i}=\min_{x\in[0,1]}\tau_i(x).$ 
Since $f_1:[0,1]\to [0,1]$ is Lipschitz, there exists $L>0$ such that 
\begin{equation}\label{eqLif1x}
|f_1(x)-f_1(x')|\leq L|x-x'|\quad \text{ for any }x,x'\in [0,1].
\end{equation}
Let $x, x'\in J_{n,\beta_1}(w)$. Then by \eqref{eqJn1b1w} and \eqref{eqLif1x}, 
\begin{align}
2\beta_1^{-n\theta_{1}}&\geq |T_{\beta_1}^nx-f_1(x)|+|T_{\beta_1}^nx'-f_1(x')|\nonumber\\
&\geq |T_{\beta_1}^nx-T_{\beta_1}^nx'|-|f_1(x)-f_1(x')|\nonumber\\
&\geq (\beta_1^n-L)|x-x'|. \label{eqbeta1nLxx}
\end{align}
Since $\beta_1^n-L>\frac{\beta_1^n}{2}$ for large $n$, \eqref{eqbeta1nLxx} implies that  for large $n$,
\begin{equation}\label{Jnbeta1}
|J_{n,\beta_1}(w)|\leq 4\beta_1^{-n(1+\theta_{1})}.
\end{equation} 
Similarly, for large $n$ we have 
\begin{equation}\label{Jnbeta2}
|J_{n,\beta_2}(v)|\leq 4\beta_2^{-n(1+\theta_{2})}.
\end{equation}

From \eqref{eqWb1b2} we see that for every $N\in\N$, the family 
\begin{equation}\label{eqcovfam}
\left\{J_{n,\beta_1}(w)\times J_{n,\beta_2}(v): n\geq N, w\in \Sigma_{\beta_1}^n, v\in \Sigma_{\beta_2}^n\right\}
\end{equation}
is a covering of $F$. In the following, we obtain upper bounds for the Hausdorff dimension of $F$ by considering three cases separately.

{\em Case 1. $\beta_1^{1+\theta_1}<\beta_2$}. Note that in this case, $$4\beta_2^{-n(1+\theta_{2})}\leq 4\beta_2^{-n}\leq 4\beta_1^{-n(1+\theta_{1})}, \quad \forall n\in\N.$$ 
Hence  each   $J_{n,\beta_1}(w)\times J_{n,\beta_2}(v)$ in the family \eqref{eqcovfam} can be covered by at most 
\[2\times \frac{4\beta_1^{-n(1+\theta_{1})}}{4\beta_2^{-n(1+\theta_{2})}}= 2\beta_2^{n\left((1+\theta_{2})-(1+\theta_{1})\log_{\beta_2}\beta_1\right)}\]
many squares of side length $4\beta_2^{-n(1+\theta_{2})}$. Thus for $s>\frac{2+\theta_{2}-\theta_{1}\log_{\beta_2}\beta_1}{1+\theta_{2}}$, we have 
\begin{align*}
\mathcal{H}^{s}_{\infty}(F)&\leq \liminf_{N\to\infty}\sum_{n=N}^{\infty}\sum_{w\in \Sigma_{\beta_1}^{n}, v\in \Sigma_{\beta_2}^{n}}2\beta_2^{n\left((1+\theta_{2})-(1+\theta_{1})\log_{\beta_2}\beta_1\right)}\left(4\sqrt{2}\beta_2^{-n(1+\theta_{2})}\right)^s\\
& \ll \liminf_{N\to\infty}\sum_{n=N}^{\infty}\beta_1^n\beta_2^n\beta_2^{n\left((1+\theta_{2})(1-s)-(1+\theta_{1})\log_{\beta_2}\beta_1\right)} \quad \ \text{(by Lemma \ref{lemcard})}\\
&= \liminf_{N\to\infty}\sum_{n=N}^{\infty}\beta_2^{n\left(2+\theta_{2}-\theta_{1}\log_{\beta_2}\beta_1-(1+\theta_{2})s\right)}\\
& =0.
\end{align*}
Hence 
\begin{equation}\label{equpp1}
\dim_{\rm H}F\leq \frac{2+\theta_{2}-\theta_{1}\log_{\beta_2}\beta_1}{1+\theta_{2}}.
\end{equation}

On the other hand, since $4\beta_2^{-n(1+\theta_{2})}\leq 4\beta_1^{-n(1+\theta_{1})}$,  each  $J_{n,\beta_1}(w)\times J_{n,\beta_2}(v)$ can be covered by a single square of side length $4\beta_1^{-n(1+\theta_{1})}$. Moreover,  since $\beta_2^{-n}<4\beta_1^{-n(1+\theta_{1})}$, we see that each such square covers at least 
\[\frac{1}{2}\times \frac{4\beta_1^{-n(1+\theta_{1})}}{\beta_2^{-n}}=8\beta_2^{n(1-(1+\theta_1)\log_{\beta_2}\beta_1)}:=k_n(w)\]
many sets $J_{n,\beta_1}(w)\times J_{n,\beta_2}(v_i)$ ($i=1,\ldots,k_n(w)$) such that $I_{n,\beta_2}(v_1),\ldots, I_{n,\beta_2}(v_{k_n(w)})$ are consecutive cylinders of order $n$. Therefore, the set $$\bigcup_{w\in \Sigma_{\beta_1}^n, v\in \Sigma_{\beta_2}^n}J_{n,\beta_1}(w)\times J_{n,\beta_2}(v)$$ can be covered by 
\[\frac{\beta_1^n}{\beta_1-1}\cdot\frac{\beta_2^n}{\beta_2-1}\cdot\frac{1}{8\beta_2^{n(1-(1+\theta_1)\log_{\beta_2}\beta_1)}}\]
many squares of side length $4\beta_1^{-n(1+\theta_{1})}$. As a consequence, for $s>\frac{2+\theta_1}{1+\theta_1}$, we have 
\begin{align*}
\mathcal{H}^s_{\infty}(F)&\ll \liminf_{N\to\infty}\sum_{n=N}^{\infty}\beta_1^n\beta_2^n\beta_2^{-n(1-(1+\theta_1)\log_{\beta_2}\beta_1)}\beta_1^{-n(1+\theta_{1})s}\\
&=\liminf_{N\to\infty}\sum_{n=N}^{\infty}\beta_2^{n\left(2+\theta_1-(1+\theta_{1})s\right)\log_{\beta_2}\beta_1}\\
&=0. 
\end{align*}
Hence 
\begin{equation*}\label{equpp49}
\dim_{\rm H}F\leq \frac{2+\theta_1}{1+\theta_1}.
\end{equation*}
This combined with \eqref{equpp1} gives the desired upper bound for $\dim_{\rm H}F$ when $\beta_1^{1+\theta_1}<\beta_2$. 

{\em Cases 2. $\beta_2\leq \beta_1^{1+\theta_1}\leq \beta_2^{1+\theta_2}$}.  Since $4\beta_2^{-n(1+\theta_{2})}\leq 4\beta_1^{-n(1+\theta_{1})}$,  as in Case 1 we see that \eqref{equpp1} still holds. Again, each $J_{n,\beta_1}(w)\times J_{n,\beta_2}(v)$ in \eqref{eqcovfam}  can be covered by a single square of side length $4\beta_1^{-n(1+\theta_{1})}$. Thus for $s>\frac{1+\log_{\beta_2}\beta_1}{(1+\theta_{1})\log_{\beta_2}\beta_1}$, we have 
\begin{align*}
\mathcal{H}^s_{\infty}(F)&\leq \liminf_{N\to\infty}\sum_{n=N}^{\infty}\sum_{w\in \Sigma_{\beta_1}^{n},v\in \Sigma_{\beta_2}^{n}}\left(4\sqrt{2}\beta_1^{-n(1+\theta_{1})}\right)^s\\
&\ll \liminf_{N\to\infty}\sum_{n=N}^{\infty}\beta_1^n\beta_2^n\beta_1^{-ns(1+\theta_{1})}\\
&=\liminf_{N\to\infty}\sum_{n=N}^{\infty}\beta_2^{n\left((1+\log_{\beta_2}\beta_1)-s(1+\theta_{1})\log_{\beta_2}\beta_1\right)}\\
&=0,
\end{align*}
which implies that 
\begin{equation*}\label{equpp2}
\dim_{\rm H}F\leq \frac{1+\log_{\beta_2}\beta_1}{(1+\theta_{1})\log_{\beta_2}\beta_1}.
\end{equation*}
Combining this with \eqref{equpp1} yields the desired upper bound for $\dim_{\rm H}F$ in Case 2.  

{\em Case 3. $\beta_1^{1+\theta_1}>\beta_2^{1+\theta_2}$}. In this case, we have 
$$4\beta_2^{-n(1+\theta_{2})}>4\beta_1^{-n(1+\theta_{1})}, \quad \forall n\in\N.$$ 
Then by a similar argument  as in Case 2, it is easily seen that 
\[\dim_{\rm H}F\leq \min\left\{\frac{1+\log_{\beta_2}\beta_1}{1+\theta_{2}}, \frac{(2+\theta_{1})\log_{\beta_2}\beta_1-\theta_{2}}{(1+\theta_{1})\log_{\beta_2}\beta_1}\right\}.\]

{\bf The lower bounds part}.  Fix a full word $w\in \Sigma_{\beta_1}^n$.  Then by Lemma \ref{lemxnw}(ii),  there exists a point $x_{n,w}\in I_{n,\beta_1}(w)$  such that 
\[|T_{\beta_1}^nx_{n,w}-f_1(x_{n,w})|<\frac{1}{2}\beta_1^{-n\kappa_1},\]
where $\kappa_i:=\max_{x\in [0,1]}\tau_i(x)$ for $i=1,2$. 
Thus for any $x\in I_{n,\beta_1}(w)$ and all large $n$, \
\begin{align*}
|T_{\beta_1}^nx-f_1(x)|-\frac{1}{2}\beta_1^{-n\kappa_1}&\leq  |T_{\beta_1}^nx-f_1(x)|-|T_{\beta_1}^nx_{n,w}-f_1(x_{n,w})|\\
&\leq |T_{\beta_1}^nx-T_{\beta_1}^nx_{n,w}|+|f_1(x)-f_1(x_{n,w})|\\
&\leq (\beta_1^n+L)|x-x_{n,w}| \quad \ \qquad (\text{by \eqref{eqLif1x}})\\
&\leq 2\beta_1^n|x-x_{n,w}|. 
\end{align*}
Hence if $|x-x_{n,w}|< \frac{1}{4}\beta_{1}^{-n(1+\tau_1(x))}$, then
\[|T_{\beta_1}^nx-f_1(x)|<2\beta_1^n\times \frac{1}{4}\beta_{1}^{-n(1+\tau_1(x))}+\frac{1}{2}\beta_1^{-n\kappa_1}\leq \beta_{1}^{-n\tau_1(x)}.\] 
This implies that 
\[J_{n,\beta_1}(w)\supset \left\{x\in I_{n,\beta_1}(w): |x-x_{n,w}|< \frac{1}{4}\beta_{1}^{-n(1+\tau_1(x))}\right\}:=\tilde{J}_{n,\beta_1}(w).\]
Similarly, for any full word $v\in \Sigma_{\beta_2}^{n}$ and for all large $n$, there exists a point $y_{n,v}\in I_{n,\beta_2}(v)$ such  that 
\[J_{n,\beta_2}(v)\supset \left\{y\in I_{n,\beta_2}(v): |y-y_{n,v}|<\frac{1}{4}\beta_{2}^{-n(1+\tau_{2}(y))}\right\}:=\tilde{J}_{n,\beta_2}(v).\]
Therefore, by \eqref{eqWb1b2} we have 
\begin{equation}\label{eqWsubset}
F\supset \bigcap_{N=1}^{\infty}\bigcup_{n=N}^{\infty}\bigcup_{w\in \Sigma_{\beta_1}^n, v\in \Sigma_{\beta_2}^n\text{ full}}\tilde{J}_{n,\beta_1}(w)\times \tilde{J}_{n,\beta_2}(v):=\widetilde{F}.
\end{equation}

Fix $\epsilon>0$. 
Since $\tau_1,\tau_2$ are  continuous functions on $[0,1]$, there exists a ball $B\subset [0,1]^2$ such that for any $(x,y)\in B$, 
\[\tau_1(x)\leq \theta_{1}+\epsilon/2, \quad \tau_2(y)\leq \theta_{2}+\epsilon/2.\]
Let $n\in\N$ be large so that 
\begin{equation}\label{eqlargen}
\beta_1^{-n\epsilon/2}\leq \frac{1}{8} \quad \text{ and } \quad  n+1\leq \beta_1^{n\epsilon}.
\end{equation}
Then for any full words $w\in \Sigma_{\beta_1}^n$ and $v\in \Sigma_{\beta_2}^n$,  we have  
\begin{equation}\label{eqBcaptJ}
B\cap \left(\tilde{J}_{n,\beta_1}(w)\times \tilde{J}_{n,\beta_2}(v)\right) \supset B\cap \left(J^*_{n,\beta_1}(w)\times J^*_{n,\beta_2}(v)\right),
\end{equation}
where 
\[J^*_{n,\beta_1}(w)=\left\{x\in I_{n,\beta_1}(w): |x-x_{n,w}|< \frac{1}{4}\beta_{1}^{-n(1+\theta_{1}+\epsilon/2)}\right\},\]
\[J^*_{n,\beta_2}(v)=\left\{y\in I_{n,\beta_2}(v): |y-y_{n,v}|<\frac{1}{4}\beta_{2}^{-n(1+\theta_{2}+\epsilon/2)}\right\}.\]
Note that $J^*_{n,\beta_1}(w)$ contains a ball centered in $I_{n,\beta_1}(w)$ of radius $\frac{1}{8}\beta_{1}^{-n(1+\theta_{1}+\epsilon/2)}$, and $J^*_{n,\beta_2}(v)$ contains a ball centered in $I_{n,\beta_2}(v)$ of radius $\frac{1}{8}\beta_{2}^{-n(1+\theta_{2}+\epsilon/2)}$. Hence we see from \eqref{eqlargen} and \eqref{eqBcaptJ} that 
\begin{align*}
&B\cap \left(\tilde{J}_{n,\beta_1}(w)\times \tilde{J}_{n,\beta_2}(v)\right)\supset B\cap \left(B\left(\overline{x}_{n,w}, \beta_1^{-n(1+\theta_{1}+\epsilon)}\right)\times B\left(\overline{y}_{n,v}, \beta_2^{-n(1+\theta_{2}+\epsilon)}\right)\right)
\end{align*}
for some $\overline{x}_{n,w}\in I_{n,\beta_1}(w)$ and $\overline{y}_{n,v}\in I_{n,\beta_2}(v)$. 
Therefore, 
\begin{equation}\label{supWtilde}
B\cap \widetilde{F}\supset B\cap W(\mathbf{t}),
\end{equation}
where 
\begin{equation}\label{eqwt}
W(\mathbf{t})=\bigcap_{N=1}^{\infty}\bigcup_{n=N}^{\infty}\bigcup_{w\in \Sigma_{\beta_1}^n, v\in \Sigma_{\beta_2}^n\text{ full}}B\left(\overline{x}_{n,w}, \beta_2^{-n(1+\theta_{1}+\epsilon)\log_{\beta_2}\beta_1}\right)\times B\left(\overline{y}_{n,v}, \beta_2^{-n(1+\theta_{2}+\epsilon)}\right),
\end{equation}
and $$\mathbf{t}=(t_1, t_2)=((\theta_{1}+2\epsilon)\log_{\beta_2}\beta_1, \theta_{2}+2\epsilon).$$

From Lemma \ref{lemn1full} we know that for every $x\in [0,1]$, among any $n+1$ consecutive cylinders of order $n$ around $x$, there is at least one full cylinder. So, there exists a full word $w\in \Sigma_{\beta_1}^n$  such that 
\[|x-\overline{x}_{n,w}|\leq (n+1)\beta_1^{-n}\leq\beta_1^{-n(1-\epsilon)}=\beta_2^{-n(1-\epsilon)\log_{\beta_2}\beta_1}.\]
Thus
\[[0,1]\subset \bigcup_{w\in\Sigma_{\beta_1}^n\text{ full}}B\left(\overline{x}_{n,w}, \beta_2^{-n(1-\epsilon)\log_{\beta_2}\beta_1}\right).\]
Similarly, 
\[[0,1]\subset \bigcup_{v\in\Sigma_{\beta_2}^n\text{ full}}B\left(\overline{y}_{n,v}, \beta_2^{-n(1-\epsilon)}\right).\]
Therefore, for all large $n$, the set 
\begin{equation}\label{eqWa01}
\left\{z\in [0,1]^2: z\in \bigcup_{w\in \Sigma_{\beta_1}^n,v\in \Sigma_{\beta_2}^n\text{ full}}B\left(\overline{x}_{n,w}, \beta_2^{-n(1-\epsilon)\log_{\beta_2}\beta_1}\right)\times B\left(\overline{y}_{n,v}, \beta_2^{-n(1-\epsilon)}\right)\right\}
\end{equation}
is of full Lebesgue measure (indeed it equals $[0,1]^2$). Let   $$\mathbf{a}=(a_1,a_2)=\left((1-\epsilon)\log_{\beta_2}\beta_1, 1-\epsilon\right).$$
Then
\[\mathbf{a}+\mathbf{t}=(a_1+t_1,a_2+t_2)=((1+\theta_{1}+\epsilon)\log_{\beta_2}\beta_1, 1+\theta_{2}+\epsilon).\]

Now by \eqref{eqwt}, \eqref{eqWa01} and Lemma \ref{lemmtp}, we have 
\begin{equation}\label{eqdimbWt}
\dim_{\rm H}B\cap W(\mathbf{t})\geq \min_{A\in\mathcal{A}}\left\{\#\mathcal{K}_1+\#\mathcal{K}_2+\frac{\sum_{k\in \mathcal{K}_3}a_k-\sum_{k\in \mathcal{K}_2}t_k}{A}\right\}:=s,
\end{equation}
where 
\[\mathcal{A}=\{a_k,a_k+t_k: k=1,2\}=\{(1-\epsilon)\log_{\beta_2}\beta_1, (1+\theta_{1}+\epsilon)\log_{\beta_2}\beta_1, 1-\epsilon, 1+\theta_{2}+\epsilon\},\]
and for each $A\in\mathcal{A}$ the sets $\mathcal{K}_1, \mathcal{K}_2, \mathcal{K}_3$ form a partition of $\{1,2\}$ defined as 
\[\mathcal{K}_1=\{k: a_k\geq A\}, \quad  \mathcal{K}_2=\{k: a_k+t_k\leq A\}\setminus \mathcal{K}_1, \quad \mathcal{K}_3=\{1,2\}\setminus(\mathcal{K}_1\cup \mathcal{K}_2). \]
For convenience,  write 
\[s_A=\#\mathcal{K}_1+\#\mathcal{K}_2+\frac{\sum_{k\in \mathcal{K}_3}a_k-\sum_{k\in \mathcal{K}_2}t_k}{A}\qquad \text{ for }A\in\mathcal{A}.\]
In the following, we evaluate $s$ in \eqref{eqdimbWt}. We consider three cases separately.

When  $\beta_1^{1+\theta_1}<\beta_2$, we let $\epsilon>0$ be small enough so that   $(1+\theta_{1}+\epsilon)\log_{\beta_2}\beta_1\leq 1-\epsilon$. Then by definition and  a simple calculation, we see that  
\begin{equation*}
s_A=\begin{cases}2& \quad \text{ if }A= (1-\epsilon)\log_{\beta_2}\beta_1,\\
\frac{2+\theta_{1}}{1+\theta_{1}+\epsilon}& \quad \text{ if }A=(1+\theta_{1}+\epsilon)\log_{\beta_2}\beta_1,\\
2-\frac{(\theta_{1}+2\epsilon)\log_{\beta_2}\beta_1}{1-\epsilon}& \quad  \text{ if }A=1-\epsilon,\\
\frac{2+\theta_{2}-(\theta_{1}+2\epsilon)\log_{\beta_2}\beta_1}{1+\theta_{2}+\epsilon} &\quad \text{ if }A=1+\theta_{2}+\epsilon.
\end{cases}
\end{equation*}
Since $(1+\theta_{1}+\epsilon)\log_{\beta_2}\beta_1\leq 1-\epsilon$, we have   
\begin{equation*}\label{eq2frac}
2-\frac{(\theta_{1}+2\epsilon)\log_{\beta_2}\beta_1}{1-\epsilon}\geq 2-\frac{\theta_{1}+2\epsilon}{1+\theta_{1}+\epsilon}=\frac{2+\theta_{1}}{1+\theta_{1}+\epsilon}.
\end{equation*}
Hence in this case, 
\begin{align*}
s=\min_{A\in\mathcal{A}}s_A=\min\left\{\frac{2+\theta_{1}}{1+\theta_{1}+\epsilon}, \frac{2+\theta_{2}-(\theta_{1}+2\epsilon)\log_{\beta_2}\beta_1}{1+\theta_{2}+\epsilon}\right\}.
\end{align*}
By this, \eqref{eqWsubset}, \eqref{supWtilde} and \eqref{eqdimbWt} (and let $\epsilon\to0$), we obtain the desired lower bound for $\dim_{\rm H}F$ when $\beta_1^{1+\theta_1}<\beta_2$. 

For the other two cases that $\beta_2\leq \beta_1^{1+\theta_1}\leq \beta_2^{1+\theta_2}$ and  $\beta_1^{1+\theta_1}>\beta_2^{1+\theta_2}$, we let $\epsilon>0$ be  small enough so that 
\[\begin{cases}
1-\epsilon<(1+\theta_{1}+\epsilon)\log_{\beta_2}\beta_1<1+\theta_{2}+\epsilon&\quad \text{ if }\beta_2\leq \beta_1^{1+\theta_1}\leq \beta_2^{1+\theta_2},\\
 (1+\theta_{1}+\epsilon)\log_{\beta_2}\beta_1\geq 1+\theta_{2}+\epsilon &\quad  \text{ if } \beta_1^{1+\theta_1}>\beta_2^{1+\theta_2}.
\end{cases}\]
Then in each of these two cases,  $s_A$ for $A\in\mathcal{A}$ (and thus $s$) can be easily evaluated. 	Indeed, we have 
\begin{equation*}
s=\begin{cases}
\min\left\{\frac{(1-\epsilon)(1+\log_{\beta_2}\beta_1)}{(1+\theta_{1}+\epsilon)\log_{\beta_2}\beta_1}, \frac{2+\theta_{2}-(\theta_{1}+2\epsilon)\log_{\beta_2}\beta_1}{1+\theta_{2}+\epsilon}\right\}& \quad \text{ if }\beta_2\leq \beta_1^{1+\theta_1}\leq \beta_2^{1+\theta_2},\\
\min\left\{\frac{(1-\epsilon)(1+\log_{\beta_2}\beta_1)}{1+\theta_{2}+\epsilon}, \frac{(2+\theta_{1})\log_{\beta_2}\beta_1-(\theta_{2}+2\epsilon)}{(1+\theta_{1}+\epsilon)\log_{\beta_2}\beta_1}\right\} & \quad \text{ if } \beta_1^{1+\theta_1}>\beta_2^{1+\theta_2}.
\end{cases}
\end{equation*}
Then a similar argument as above yields the desired lower bounds for $\dim_{\rm H}F$ in these two cases. This completes the proof of Theorem \ref{thm2}.

\subsection{Proof of Theorem \ref{thm3}}
Since the proof of Theorem \ref{thm3} is similar to that of Theorem \ref{thm2}, in this subsection we only point out the modifications of Subsection \ref{subs1} needed to prove Theorem \ref{thm3}.

{\bf The  upper bounds part}. We prove this part under the weaker assumption that $\beta_2\geq \beta_1^{\theta_1}$ and $\beta_1\geq \beta_2^{\theta_2}$. Since $g_1, g_2:[0,1]^2\to [0,1]$ are Lipschitz functions, there exists $L>0$ such that for any $(x,y), (x',y')\in [0,1]^2$, 
\begin{equation}\label{eqgiLip}
|g_i(x,y)-g_i(x',y')|\leq L\|(x-x', y-y')\|, \quad i=1,2.
\end{equation}

For $w\in \Sigma_{\beta_1}^{n}$ and $v\in \Sigma_{\beta_2}^n$, let $a^1_{n,w}$, $a^2_{n,v}$ be the left endpoints of  $I_{n,\beta_1}(w)$ and $I_{n,\beta_2}(v)$, respectively. Let $(x,y)\in I_{n,\beta_1}(w)\times I_{n,\beta_2}(v)$ such that 
 \[|T_{\beta_1}^nx-g_1(x,y)|<\beta_1^{-n\tau_1(x)}, \quad |T_{\beta_2}^ny-g_2(x,y)|<\beta_2^{-n\tau_2(y)}.\]
 Then we have 
 \begin{align*}
 |T_{\beta_1}^nx-g_1(x,a^2_{n,v})|&\leq |T_{\beta_1}^nx-g_1(x,y)|+|g_1(x,y)-g_1(x,a^2_{n,v})| \\
 &< \beta_1^{-n\tau_{1}(x)}+L|y-a^2_{n,v}|\\
 &\leq \beta_1^{-n\theta_{1}}+L\beta_2^{-n} \\
 &\leq (L+1)\beta_1^{-n\theta_{1}},
 \end{align*}
 where the last inequality holds  since $\beta_2\geq \beta_1^{\theta_1}$. 
 Similarly, we have  
 \begin{align*}
 |T_{\beta_2}^ny-g_2(a^1_{n,w},y)|&< \beta_2^{-n\theta_{2}}+L\beta_1^{-n}\leq (L+1)\beta_2^{-n\theta_{2}},
 \end{align*}
 where in  the second inequality we have used the assumption that   $\beta_1\geq \beta_2^{\theta_2}$. 

Let 
\[J_{n,1}(w,v)=\left\{x\in I_{n,\beta_1}(w): |T_{\beta_1}^nx-g_1(x,a^2_{n,v})|<(L+1)\beta_1^{-n\theta_{1}}\right\},\]
\[J_{n,2}(w,v)=\left\{y\in I_{n,\beta_2}(v): |T_{\beta_2}^ny-g_2(a^1_{n,w},y)|<(L+1)\beta_2^{-n\theta_{2}}\right\}.\]
Then from the above we see that 
\begin{align}
G&=\bigcap_{N=1}^{\infty}\bigcup_{n=N}^{\infty}\bigcup_{w\in \Sigma_{\beta_1}^n, v\in \Sigma_{\beta_2}^n}\left\{(x,y)\in I_{n,\beta_1}(w)\times I_{n,\beta_2}(v): \begin{aligned}|T_{\beta_1}^nx-g_1(x,y)|<\beta_1^{-n\tau_1(x)}\\
|T_{\beta_2}^ny-g_2(x,y)|<\beta_2^{-n\tau_2(y)}\end{aligned}\right\}\label{eqbetag1g2}\\
&\subset\bigcap_{N=1}^{\infty}\bigcup_{n=N}^{\infty}\bigcup_{w\in \Sigma_{\beta_1}^n, v\in \Sigma_{\beta_2}^n}J_{n,1}(w,v)\times J_{n,2}(w,v).\nonumber
\end{align}
Moreover, by the same argument as in \eqref{eqbeta1nLxx}, we see that for all large $n$, 
\[|J_{n,1}(w,v)|\leq 4(L+1)\beta_1^{-n(1+\theta_1)}, \quad |J_{n,2}(w,v)|\leq 4(L+1)\beta_2^{-n(1+\theta_2)}.\]
This is completely analogous to  \eqref{Jnbeta1} and \eqref{Jnbeta2}. Then  the   upper bounds part of Theorem \ref{thm3} follows from the same proof of  that of Theorem \ref{thm2}.

{\bf The lower bounds part}. Fix full words $w\in \Sigma_{\beta_1}^n$ and $v\in \Sigma_{\beta_2}^n$.  Applying Lemma \ref{lemxnw}(ii) to the function $x\mapsto g_1(x, a^2_{n,v})$ (which is Lipschitz since $g_1:[0,1]^2\to [0,1]$ is Lipschitz), there exists a point $x_{n,w,v}\in I_{n,\beta_1}(w)$ such that 
\[|T_{\beta_1}^nx_{n,w,v}-g_1(x_{n,w,v}, a^2_{n,v})|<\beta_2^{-n}.\] 
Thus for any $(x,y)\in I_{n,\beta_1}(w)\times I_{n,\beta_2}(v)$, we have  by \eqref{eqgiLip} that
\begin{align*}
|T_{\beta_1}^nx-g_1(x,y)|-\beta_2^{-n}&\leq |T_{\beta_1}^nx-g_1(x,y)|-|T_{\beta_1}^nx_{n,w,v}-g_1(x_{n,w,v}, a^2_{n,v})|\\
& \leq |T_{\beta_1}^nx-T_{\beta_1}^nx_{n,w,v}|+|g_1(x,y)-g_1(x_{n,w,v}, a^2_{n,v})|\\
&\leq \beta_1^n|x-x_{n,w,v}|+L(|x-x_{n,w,v}|+|y-a^2_{n,v}|)\\
&\leq (\beta_1^n+L)|x-x_{n,w,v}|+L\beta_2^{-n}.
\end{align*}
Since $\beta_2> \beta_1^{\kappa_1}$ (recall that $\kappa_i=\max_{x\in[0,1]}\tau_i(x)$ for $i=1,2$),   we see that if $|x-x_{n,w,v}|<\frac{1}{4}\beta_1^{-n(1+\tau_1(x))}$, then there exists $n_0\in\N$ (independent of $(x,y)$) such that for all $n\geq n_0$, 
\[|T_{\beta_1}^nx-g_1(x,y)|<2\beta_1^n\cdot \frac{1}{4}\beta_1^{-n(1+\tau_1(x))}+(L+1)\beta_2^{-n}\leq \beta_1^{-n\tau_1(x)}.\]
Similarly, there is a point $y_{n,w,v}\in I_{n,\beta_2}(v)$ such that 
\[|T_{\beta_2}^ny_{n,w,v}-g_2(a^1_{n,w}, y_{n,w,v})|<\beta_1^{-n}.\] 
Thus by a similar argument as above, we have 
\begin{align*}
|T_{\beta_2}^ny-g_2(x,y)|-\beta_1^{-n} \leq (\beta_2^n+L)|y-y_{n,w,v}|+L\beta_1^{-n}.
\end{align*}
Hence  if $|y-y_{n,w,v}|<\frac{1}{4}\beta_2^{-n(1+\tau_2(y))}$, then for large $n$,
\[|T_{\beta_2}^ny-g_2(x,y)|<2\beta_2^n\cdot \frac{1}{4}\beta_2^{-n(1+\tau_2(y))}+(L+1)\beta_1^{-n}\leq \beta_2^{-n\tau_2(y)},\]
where  the second inequality holds for large $n$ since $\beta_1>\beta_2^{\kappa_2}$. 

Let 
\[\tilde{J}_{n,1}(w,v)=\left\{x\in I_{n,\beta_1}(w): |x-x_{n,w,v}|< \frac{1}{4}\beta_1^{-n(1+\tau_1(x))}\right\},\]
\[\tilde{J}_{n,2}(w,v)=\left\{y\in I_{n,\beta_2}(v): |y-y_{n,w,v}|< \frac{1}{4}\beta_2^{-n(1+\tau_2(y))}\right\}.\]
Then from the above (and recall \eqref{eqbetag1g2}), we see that 
\begin{align*}
G\supset \bigcap_{N=1}^{\infty}\bigcup_{n=N}^{\infty}\bigcup_{w\in\Sigma_{\beta_1}^n, v\in \Sigma_{\beta_2}^n\text{ full}}\tilde{J}_{n,1}(w,v)\times \tilde{J}_{n, 2}(w,v).
\end{align*}
This is in complete analogy  to \eqref{eqWsubset}. 
Then a proof similar to the second part of Subsection \ref{subs1} yields the  lower bounds in Theorem \ref{thm3}. 

{\noindent \bf  Acknowledgements}. The author thanks Weiliang Wang for some  comments. He also wish to thank the anonymous referees for their suggestions that led to the improvement of the paper.

\end{document}